\documentclass[a4paper,12pt,reqno]{amsart}
\usepackage{amsmath,a4,amsfonts,amssymb,amsthm,enumerate,fleqn,breqn,}
\usepackage{enumitem}
\setdescription{leftmargin=0cm,labelindent=0cm}

\usepackage{etoolbox, stmaryrd}
\patchcmd{\section}{\scshape}{\bfseries}{}{}
\makeatletter
\renewcommand{\@secnumfont}{\bfseries}
\makeatother

\mathindent4em

\mathsurround1pt

\pretolerance=5000
\hyphenpenalty=5000

\theoremstyle{plain}
\newtheorem{theorem}{\bf Theorem}[section]
\newtheorem{lemma}[theorem]{\bf Lemma}
\theoremstyle{remark}
\newtheorem{definition}[theorem]{\bf Definition}

\newtheorem{example}[theorem]{\bf Example}
\newtheorem{remark}[theorem]{\bf Remark}

\headsep=2.25em
\title[Characterizations of Dual of $K$-frames in Quaternionic Hilbert Spaces ]{Characterizations of Dual of $K$-frames in Quaternionic Hilbert Spaces}
\author[Chander Shekhar]{Chander Shekhar}
\address{Chander Shekhar , Department of Mathematics,Indraprastha college for Women, University of Delhi, Delhi-110007,INDIA}
\email{cshekhar@ip.du.ac.in}

\begin{document}
	\maketitle \baselineskip14pt
	\begin{abstract}
		Dual of $K-$ frames in a right quaternionic Hilbert space has been recently introduced and studied by Ellouz\cite{a}.  In this paper, we study duals of $K-$ frames and prove a characterization of a $K-$ dual in terms of the canonical $K-$ dual of a $K-$ frame in the case when the range of $K$ is not necessarily closed. Moreover, we defined and studied the Approximate $K-$ Dual of a $K-$ frame in a right quaternionic Hilbert space and proved various results for their existence. 
	\end{abstract}

	\section{Introduction}
	Frames in Hilbert spaces were first introduced by J. Duffin and A.C. Schaeffer in 1952 in the context of nonharmonic Fourier series \cite{k}. Several decades later, in 1986, Daubechies, Grossmann, and Meyer revived and further developed the concept, leading to its widespread use in signal processing and harmonic analysis \cite{l}. In fact, J. Duffin and A.C. Schaeffer \cite{k} defined frame for Hilbert space $\mathbb{H}$ as follows: \\
	A countable family $\{f_k\}_{k\in\ I}\subset\mathbb{H}$ is called a frame for $\mathbb{H}$ if there exist positive constants $A, B$ such that 
	
	\begin{align}\label{1}
		A||x||^2\le\sum_{k \in I}|\langle x, f_k\rangle|^2\le B||x||^2, \quad x\in \mathbb{H}.
	\end{align}
	where $A$ and $B$ are the lower and upper bounds, respectively. 
	If only the upper inequality in \eqref{1}  is satisfied, then $\{f_k\}_{k\in\ I}$ is called a Bessel sequence. A frame $\{f_k\}_{k\in\ I}$ is said to be
	\begin{itemize}
		\item Tight if in inequality \eqref{1} , $A=B$
		\item Parseval if inequality \eqref{1} , $A=B=1$
		\item Exact if it ceases to be a frame whenever any single element is removed from the sequence $\{f_k\}_{k\in\ I}$
	\end{itemize}
	In recent years, frame theory has been extensively studied, leading to numerous generalizations. For references, see \cite{b,c,d,e,f, h,i, j,m,n,q}. In particular, Gavruta \cite{e} in 2012 introduced and explored the concept of $K-$ frames in Hilbert spaces, providing the following definition:

	Let $K$ be any bounded operator on a Hilbert space $\mathbb{H}$. A countable family $\{f_k\}_{k\in I}\subset\mathbb{H}$ is called a $K$-frame for $\mathbb{H}$ if there exist positive constants $A$ and $B$ such that 
	
	\begin{align*}
		A||K^*x||^2\le\sum_{k\in I}|\langle x, f_k\rangle|^2\le B||x||^2, \quad x\in \mathbb{H}.
	\end{align*}
	Also, $\{f_k\}_{k\in I}$ is said to be a Parseval $K-$ frame if  $||K^*x||^2=\sum_{k\in I}|\langle x, f_k\rangle|^2$.
	
	Clearly, when $K=I_{\mathbb{H}}$, the notion of a $K-$ frame reduces to that of an ordinary frame. Thus, $K-$ frames naturally extend the concept of ordinary frames.
	
	Every $K-$ frame is evidently a Bessel sequence. Hence, similar to ordinary frames, the synthesis operator for a $K-$ frame $F= \{f_k\}_{k\in I}$ can be defined as
	\begin{align*}
		T_F:\ell^2 \to \mathbb{H} ; \hspace{1cm} T_F(\{c_k\}_{k\in I})= \sum_{k\in I}c_kf_k
	\end{align*}
	
	It is a bounded linear operator, and its adjoint, known as the analysis operator, is given by
	\begin{align*}
		T_F^* : \mathbb{H} \to \ell^2; \hspace{1cm} T_F^*(x)=\{\langle x,f_k \rangle\}_{k\in I}
	\end{align*}
	
	Finally, the frame operator is given by
	\begin{align*}
		S_F : \mathbb{H}\to \mathbb{H}; \hspace{1cm} S_F(x)= T_FT_F^*(x)= \sum_{k\in I}\langle x, f_k \rangle f_k
	\end{align*}
	
	Observe that the frame operator of a $K-$ frame is not necessarily invertible, but if the range of $K$ is closed, then $S_F$ from $R(K)$ onto $S_F(R(K))$ is an invertible operator \cite{d}.
	Also 
	\begin{align*}
		B^{-1}||x|| \leq ||S_F^{-1}x|| \leq A^{-1}||K^{\dagger}||^2 ||x||, \quad x\in S_F(R(K))
	\end{align*}
	where $K^{\dagger}$ is the pseudo inverse of $K$\cite{b}.

	In \cite{b}, the pseudo-inverse of an operator $K\in B(\mathbb{H})$ with closed range is denoted by $K^{\dagger}$ and is defined as the extension of the operator $\widetilde{K}^{-1}$ to $\mathbb{H}=R(K)+R(K)^{\perp}$ with $N(K^{\dagger})=R(K)^{\perp}$ where $\widetilde{K}:=K|_{N(K)^{\perp}}:N(K)^{\perp}\to\ R(K)$.   Also pseudo inverse $K^{\dagger}$ satifies the following properties:
	\begin{enumerate}
		\item[(i)] $KK^\dagger K=K$
		\item[(ii)] $K^{\dagger}KK^{\dagger}=K^{\dagger}$
		\item[(iii)] $(KK^{\dagger})^{*}=KK^{\dagger}$
		\item[(iv)] $(K^{\dagger}K)^*=K^{\dagger}K$
	\end{enumerate}

	\section{Quaternionic Hilbert Space}
	
	The space of quaternions is given by 
	$$\mathfrak{H}=\{q_1+q_2i+q_3j+q_4k: q_!,q_2,q_3,q_4\in \mathbb{R}\}$$
	where $i^2=j^2=k^2=-1; ij=-ji=-k; jk=-kj=i; \& ki=-ik=j.$  Note that $\mathfrak{H}$ is a four-dimensional algebra with unity. 
	
	For $q\in\mathfrak{H}$ we have the following:
	\begin{itemize}
		\item Conjugate: $$\bar{q}=q_1-q_2i-q_3j-q_4k$$
		\item Modulus: $$|q|^2=q_1^2+q_2^2+q_3^2+q_4^2$$
		\item Real and Imaginary parts: $$Re(q)=q_1, \hspace{1cm}Im(q)=q_2i+q_3j+q_4k$$
		\item Inverse: $$q^{-1}=\frac{q_1-q_2i-q_3j-q_4k}{q_1^2+q_2^2+q_3^2+q_4^2}$$
	\end{itemize} 
	
	Due to the non-commutative nature of quaternions, there exist two types of quaternionic Hilbert spaces: the left quaternionic Hilbert space and the right quaternionic Hilbert space, distinguished by the placement of quaternions.
	
	In this section, we introduce some fundamental notations related to the algebra of quaternions, right quaternionic Hilbert spaces, and operators on these spaces.
	
	\begin{definition}
		A right quaternionic vector space $\mathbb{H}(\mathfrak{H})$ is a linear vector space over the field of quaternions $\mathfrak{H}$, equipped with right-scalar multiplication satisfying the following properties:
		For each $x,y \in \mathbb{H}(\mathfrak{H})$ and $p,q \in \mathfrak{H} $
		\begin{align*}
			(x+y)q&=xq+yq\\
			x(p+q)&=xp+xq\\
			x(pq)&=(xp)q
		\end{align*}
	\end{definition}
	
	\begin{definition}
		A right quaternionic inner product space $\mathbb{H}(\mathfrak{H})$  is a right quaternionic vector space together with the binary mapping $\langle . , . \rangle : \mathbb{H}(\mathfrak{H}) \times \mathbb{H}(\mathfrak{H}) \to \mathfrak{H}$ satisfying the following properties:
		\begin{itemize}
			\item $\overline{\langle x, y \rangle}= \langle y, x \rangle , \hspace{1cm} \forall x,y \in \mathbb{H}(\mathfrak{H})$
			\item $\langle x, x \rangle > 0 \hspace{0.5cm} if \hspace{0.5cm} x>0$
			\item $\langle x, y_1 +y_2 \rangle = \langle x, y_1 \rangle + \langle x, y_2 \rangle \hspace{1cm} \forall x, y_1, y_2 \in \mathbb{H}(\mathfrak{H})$
			\item $\langle x, yq \rangle = \langle x,y \rangle q$ \hspace{1cm} $\forall x,y \in \mathbb{H}(\mathfrak{H})$ and $q \in \mathfrak{H}$.
		\end{itemize}
	\end{definition}
	
	\begin{definition}\label{b}
		The right quaternionic inner product space $\mathbb{H}(\mathfrak{H})$  with the inner product $\langle . , . \rangle : \mathbb{H}(\mathfrak{H}) \times \mathbb{H}(\mathfrak{H}) \to \mathfrak{H}$ is called \textit{a right quaternionic Hilbert space} if it is complete with respect to the norm $||.|| :\mathbb{H}(\mathfrak{H}) \to \mathbb{R}^+ $ defined by $$||x||= \sqrt{\langle x, x \rangle}, \hspace{1cm} x\in \mathbb{H}(\mathfrak{H})$$ 
	\end{definition}
	
	\begin{theorem}\cite{o}
		If  $\mathbb{H}(\mathfrak{H})$ is a \textit{right quaternionic Hilbert space} then
		\begin{align*}
			{|\langle x, y \rangle|}^2 \leq \langle x, x \rangle \langle y, y \rangle , \hspace{1cm} \forall x, y \in \mathbb{H}(\mathfrak{H}).
		\end{align*}
		Moreover the norm defined in definition \ref{b} satisfy the following properties:
		\begin{itemize}
			\item $||xq||=||x||\hspace{0.1cm} |q|, \hspace{1cm} \forall x \in \mathbb{H}(\mathfrak{H})$ and $q \in \mathfrak{H}.$
			\item $||x+y|| \leq ||x|| + ||y||, \hspace{1cm} \forall x, y \in \mathbb{H}(\mathfrak{H})$
			\item $||x||=0$ for some $x \in \mathbb{H}(\mathfrak{H})$ then $x=0$
		\end{itemize}
	\end{theorem}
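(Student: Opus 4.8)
The plan is to establish each of the four assertions by reducing a quaternionic computation to a statement about nonnegative \emph{real} numbers, exploiting the fact that $\langle x,x\rangle$ always lies in $\mathbb{R}^+$ and is therefore central in $\mathfrak{H}$.

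First I would prove the Cauchy--Schwarz inequality. The case $y=0$ is immediate since both sides vanish, so assume $y\neq 0$; then $\langle y,y\rangle>0$ is a positive real and its inverse $\langle y,y\rangle^{-1}$ is again a positive real. Put $\lambda:=\langle y,y\rangle^{-1}\langle y,x\rangle$ and set
$$z = x - y\lambda ,$$
using right multiplication by the quaternion $\lambda$. Expanding $0\le\langle z,z\rangle$ by additivity in the second slot together with $\langle x,yq\rangle=\langle x,y\rangle q$ and $\langle xq,y\rangle=\bar q\,\langle x,y\rangle$ (the latter obtained from conjugate symmetry), the cross terms combine --- here one uses that $\langle y,y\rangle^{-1}$ is real and hence commutes through --- and one arrives at $\langle z,z\rangle=\langle x,x\rangle-\langle y,y\rangle^{-1}\,|\langle y,x\rangle|^2$. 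Nonnegativity then yields $|\langle y,x\rangle|^2\le\langle x,x\rangle\langle y,y\rangle$, and since $|\langle x,y\rangle|=|\langle y,x\rangle|$ this is the claimed inequality.

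Next I would derive the three norm properties. For the scalar identity, $\|xq\|^2=\langle xq,xq\rangle=\bar q\,\langle x,x\rangle\,q=\langle x,x\rangle\,\bar q q=\|x\|^2\,|q|^2$, because the real scalar $\langle x,x\rangle$ is central, so $\|xq\|=\|x\|\,|q|$ after taking square roots. For the triangle inequality, expand $\|x+y\|^2=\|x\|^2+\langle x,y\rangle+\langle y,x\rangle+\|y\|^2=\|x\|^2+2\,\mathrm{Re}\langle x,y\rangle+\|y\|^2$ via conjugate symmetry, then bound $\mathrm{Re}\langle x,y\rangle\le|\langle x,y\rangle|\le\|x\|\,\|y\|$ using Cauchy--Schwarz to get $\|x+y\|^2\le(\|x\|+\|y\|)^2$. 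Finally, $\|x\|=0$ forces $\langle x,x\rangle=0$, and the positivity axiom of the inner product gives $x=0$.

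The computations are all routine; the one point that requires care is the bookkeeping of left/right quaternion multiplication in the expansion of $\langle z,z\rangle$ for Cauchy--Schwarz, where noncommutativity of $\mathfrak{H}$ forbids reordering factors except when one of them is a real scalar such as $\langle y,y\rangle^{-1}$ or $|\langle y,x\rangle|^2$. Placing the scalar $\lambda$ on the \emph{right} of $y$ in the definition of $z$ is precisely what makes the cross terms cancel cleanly, and this is the only genuinely quaternion-specific subtlety in the argument.
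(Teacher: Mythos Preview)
Your argument is correct and is exactly the standard proof of Cauchy--Schwarz in the quaternionic setting: the key device of putting the scalar $\lambda=\langle y,y\rangle^{-1}\langle y,x\rangle$ on the \emph{right} of $y$ is precisely what is needed, and your handling of the centrality of the real scalars $\langle y,y\rangle^{-1}$ and $|\langle y,x\rangle|^2$ is accurate. The three norm properties follow routinely as you indicate.

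There is nothing to compare against, however: the paper does not prove this theorem at all but simply quotes it from \cite{o} (Ghiloni--Moretti--Perotti), so your proposal in fact supplies strictly more than the paper does.
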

	
	\begin{example}\cite{o}
		For the non- commutative field of quaternions $\mathfrak{H}$, define the space $\ell^2(\mathfrak{H})$ by 
		\begin{align*}
			\ell^2(\mathfrak{H})= \Bigg\{ \{q_i \}_{i\in I} \subset \mathfrak{H} : \sum_{i\in I} ||q_i||^2 < + \infty \Bigg \}
		\end{align*}
		Clearly $\ell^2(\mathfrak{H})$ is a \textit{right quaternionic Hilbert space} with respect to the inner product defined as 
		\begin{align*}
			\langle p, q \rangle = \sum_{i\in I} \bar{p_i} q_i , \hspace{1cm} p=\{p_i\}_{i\in I}, q=\{q_i\}_{i\in I} \in \ell^2(\mathfrak{H})
		\end{align*}
	\end{example}
	\begin{definition}\cite{o}
		Let $\mathbb{H}(\mathfrak{H})$ be a right quaternionic Hilbert space and $M$ be a subset of $\mathbb{H}(\mathfrak{H})$. Then define the sets 
		\begin{itemize}
			\item $M^{\perp}=\{x\in \mathbb{H}(\mathfrak{H}) : \langle x, y \rangle =0 \hspace{0.5cm} \forall y \in M\}$
			\item $\langle M \rangle$ be the right linear subspace of $\mathbb{H}(\mathfrak{H})$  consisting of all finite right linear combinations of elements of $M$.
		\end{itemize}

	\end{definition}
	
	\begin{theorem}\cite{o}\label{c}
		Let $\mathbb{H}(\mathfrak{H})$ be a right quaternionic Hilbert space and $N$ be a subset of $\mathbb{H}(\mathfrak{H})$ such that, $z_1, z_2 \in N, \langle z_1, z_2 \rangle =0$ if $z_1 \neq z_2$ and $\langle z_1, z_1 \rangle =1$. Then the following conditions are equivalent:
		\begin{enumerate}
			\item [(a)] For every $x, y \in \mathbb{H}(\mathfrak{H})$, the series $\sum_{z\in N} \langle x, z \rangle \langle z, y \rangle$ converges absolutely and 
			\begin{align*}
				\langle x, y \rangle= \sum_{z\in N} \langle x, z \rangle \langle z, y \rangle
			\end{align*}
			\item [(b)] For every $x\in \mathbb{H}(\mathfrak{H}), ||x||^2 = \sum_{z\in N} |\langle x, z \rangle |^2$
			\item [(c)] $N^{\perp}=0$
			\item [(d)] $\langle N \rangle =0$
		\end{enumerate}
	\end{theorem}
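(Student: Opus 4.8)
The plan is to prove the cyclic chain $(a)\Rightarrow(b)\Rightarrow(c)\Rightarrow(d)\Rightarrow(a)$. The first two links are immediate. Setting $y=x$ in the identity of $(a)$ and using $\langle x,z\rangle\langle z,x\rangle=\langle x,z\rangle\,\overline{\langle x,z\rangle}=|\langle x,z\rangle|^2$ together with $\langle x,x\rangle=\|x\|^2$ gives $(b)$. If $x\in N^\perp$ then every coefficient $\langle x,z\rangle$ vanishes, so $(b)$ forces $\|x\|^2=0$ and hence $x=0$; thus $(c)$ holds. For $(c)\Rightarrow(d)$ I would appeal to the orthogonal decomposition theorem for right quaternionic Hilbert spaces applied to the closed right-linear subspace $M:=\overline{\langle N\rangle}$: one has $\mathbb{H}(\mathfrak{H})=M\oplus M^\perp$, so if $M\neq\mathbb{H}(\mathfrak{H})$ there is a nonzero $x\in M^\perp\subseteq N^\perp$, contradicting $(c)$; hence $\overline{\langle N\rangle}=\mathbb{H}(\mathfrak{H})$, which is the content of $(d)$.

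The substantive step is $(d)\Rightarrow(a)$. First I would record Bessel's inequality $\sum_{z\in F}|\langle x,z\rangle|^2\le\|x\|^2$ for every finite $F\subseteq N$: expanding the nonnegative quantity $\big\|x-\sum_{z\in F}z\langle z,x\rangle\big\|^2$, using the orthonormality $\langle z,w\rangle=\delta_{zw}$ and the rule $\langle x,yq\rangle=\langle x,y\rangle q$, all the cross terms collapse and leave exactly $\|x\|^2-\sum_{z\in F}|\langle x,z\rangle|^2$. Consequently $\{\langle x,z\rangle\}_{z\in N}$ is square-summable, only countably many of its entries are nonzero, and — by the same orthonormality computation applied to a finite block — the net of partial sums of $\sum_{z\in N}z\langle z,x\rangle$ is Cauchy, hence converges to some $\hat x\in\mathbb{H}(\mathfrak{H})$ by completeness. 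Cauchy–Schwarz (stated in the excerpt) makes the inner product continuous in each variable, so $\langle z,\,x-\hat x\rangle=0$ for every $z\in N$, whence $x-\hat x\perp\langle N\rangle$ and, by continuity, $x-\hat x\perp\overline{\langle N\rangle}=\mathbb{H}(\mathfrak{H})$; pairing $x-\hat x$ with itself gives $x=\hat x=\sum_{z\in N}z\langle z,x\rangle$. Finally, for arbitrary $y$, continuity of $\langle\cdot,y\rangle$ yields $\langle x,y\rangle=\sum_{z\in N}\overline{\langle z,x\rangle}\langle z,y\rangle=\sum_{z\in N}\langle x,z\rangle\langle z,y\rangle$, and applying Cauchy–Schwarz in $\ell^2(\mathfrak{H})$ to $\{\langle x,z\rangle\}$ and $\{\langle z,y\rangle\}$ bounds $\sum_{z\in N}|\langle x,z\rangle|\,|\langle z,y\rangle|$ by $\|x\|\,\|y\|$, so the convergence is absolute; this is $(a)$.

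The main obstacle is essentially bookkeeping: because the scalars do not commute one must keep every quaternionic factor on its correct (right) side throughout the expansion of $\big\|x-\sum_{z\in F}z\langle z,x\rangle\big\|^2$ and when moving the inner product past an infinite sum, and one must justify that the a priori uncountable index set $N$ contributes only countably many nonzero terms and that the resulting series may be manipulated as in the separable case — both of which follow from the Bessel bound exactly as over $\mathbb{C}$. No genuinely new idea beyond the classical Hilbert-space proof is required; the quaternionic orthogonal decomposition theorem and completeness of $\ell^2(\mathfrak{H})$ carry the argument.
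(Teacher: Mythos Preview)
The paper does not supply its own proof of this theorem: it is stated with a citation to \cite{o} and immediately followed by the next definition, so there is nothing in the manuscript to compare your argument against. Your cyclic proof $(a)\Rightarrow(b)\Rightarrow(c)\Rightarrow(d)\Rightarrow(a)$ is the standard Hilbert-space argument, and each step is carried out correctly in the right-quaternionic setting; in particular your expansion of $\big\|x-\sum_{z\in F}z\langle z,x\rangle\big\|^2$ respects the conventions $\langle x,yq\rangle=\langle x,y\rangle q$ and $\langle xq,y\rangle=\bar q\,\langle x,y\rangle$ in force here, and the passage from Bessel's inequality to the convergence of $\sum_{z\in N}z\langle z,x\rangle$ and then to the Parseval identity is sound.

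One remark: you silently (and correctly) read condition~(d) as ``$\overline{\langle N\rangle}=\mathbb{H}(\mathfrak{H})$'' rather than the literal ``$\langle N\rangle=0$'' printed in the paper, which is an evident typo (an orthonormal set cannot have zero span). It would be worth flagging that correction explicitly, since as written the four conditions are not even mutually consistent.
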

	\begin{definition}\cite{o}
		Every quaternionic Hilbert space $\mathbb{H}(\mathfrak{H})$ admits a subset $N$, called Hilbert basis or orthonormal basis of $\mathbb{H}(\mathfrak{H})$, such that for $z_1, z_2 \in N, \langle z_1, z_2 \rangle =0$ if $z_1 \neq z_2$ and $\langle z_1, z_1 \rangle =1$ and satisfies all the conditions of Theorem \ref{c}.
	\end{definition}
	\begin{definition}\cite{p}
		Let $\mathbb{H}(\mathfrak{H})$ be a right quaternionic Hilbert space and $T$ be an operator on $\mathbb{H}(\mathfrak{H})$. Then $T$ is said to be 
		\begin{itemize}
			\item right linear if $T(xp+yq)=T(x)p + T(y)q$, for all $x, y \in \mathbb{H}(\mathfrak{H})$ and $p,q \in \mathfrak{H}$.
			\item bounded if there exists $K \geq 0$ such that $||T(x)|| \leq K ||x||$, for all $x \in \mathbb{H}(\mathfrak{H})$.            
		\end{itemize}
	\end{definition}
	
	\begin{definition}\cite{p}
		Let $\mathbb{H}(\mathfrak{H})$ be a right quaternionic Hilbert space and $T$ be an operator on $\mathbb{H}(\mathfrak{H})$. Then the adjoint $T^*$ of $T$ is defined by 
		\begin{align*}
			\langle T^*x, y \rangle = \langle x, Ty \rangle ,\hspace{1cm} \forall x, y \in \mathbb{H}(\mathfrak{H})
		\end{align*}
		Further, $T$ is said to be self adjoint if $T=T^*$.
	\end{definition}
	
	\begin{theorem}\cite{p}
		Let $\mathbb{H}(\mathfrak{H})$ be a right quaternionic Hilbert space and $T_1$, $T_2$ be bounded right linear operators on $\mathbb{H}(\mathfrak{H})$. Then 
		\begin{enumerate}
			\item [(a)] $T_1 + T_2$ and $T_1 T_2$ are also bounded right linear operators on $\mathbb{H}(\mathfrak{H})$. Moreover 
			\begin{align*}
				||T_1 + T_2|| \leq ||T_1 || + ||T_2 ||, \hspace{1cm}
				||T_1 T_2 || = ||T_1|| \hspace{0.1cm} ||T_2||
			\end{align*}
			\item [(b)] $\langle T_1x, y \rangle= \langle x, T_1^*y \rangle , \hspace{1cm} \forall x,y \in \mathbb{H}(\mathfrak{H})$
			\item [(c)] $(T_1 + T_2)^* = T_1^* + T_2^*$
			\item [(d)] $(T_1 T_2)^* = T_2^* T_1^*$
			\item [(e)] $(T_1^*)^* = T$
			\item [(f)] $I^* = I$ where $I$ is an identity operator on $\mathbb{H}(\mathfrak{H})$
			\item [(g)] If $T$ is an invertible operator then $(T^{-1})^* = (T^*)^{-1}$
		\end{enumerate}
	\end{theorem}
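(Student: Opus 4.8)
The plan is to derive all seven assertions by elementary manipulation of the quaternionic inner product, using only the inner-product axioms (especially conjugate-symmetry $\overline{\langle x,y\rangle}=\langle y,x\rangle$ and right-linearity in the second argument), the defining relation $\langle T^{*}x,y\rangle=\langle x,Ty\rangle$, the Cauchy--Schwarz inequality, and the norm properties recalled just above. For (a): given $x,y\in\mathbb{H}(\mathfrak{H})$ and $p,q\in\mathfrak{H}$, right-linearity of $T_{1}+T_{2}$ is immediate, and right-linearity of $T_{1}T_{2}$ follows by applying right-linearity of $T_{1}$ to $T_{2}(x)p+T_{2}(y)q$; the norm statements come from $\|(T_{1}+T_{2})x\|\le\|T_{1}x\|+\|T_{2}x\|$ and $\|T_{1}T_{2}x\|\le\|T_{1}\|\,\|T_{2}x\|\le\|T_{1}\|\,\|T_{2}\|\,\|x\|$, together with the triangle inequality for the norm.

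Before proving (c)--(g) I would first check that $T^{*}$ is a genuine bounded right-linear operator. For fixed $y$, the map $x\mapsto\langle Tx,y\rangle$ is additive and satisfies $\langle T(xq),y\rangle=\bar{q}\,\langle Tx,y\rangle$, so by the quaternionic Riesz representation theorem there is a unique vector, called $T^{*}y$, with $\langle Tx,y\rangle=\langle x,T^{*}y\rangle$ for all $x$; uniqueness forces $T^{*}$ to be right-linear, and $\|T^{*}y\|^{2}=\langle TT^{*}y,y\rangle\le\|T\|\,\|T^{*}y\|\,\|y\|$ gives $\|T^{*}\|\le\|T\|$. The identity $\langle Tx,y\rangle=\langle x,T^{*}y\rangle$ is exactly (b), which can alternatively be read off from the stated definition by taking conjugates: $\langle T_{1}^{*}x,y\rangle=\langle x,T_{1}y\rangle$ gives $\langle y,T_{1}^{*}x\rangle=\langle T_{1}y,x\rangle$, and relabelling $x\leftrightarrow y$ yields the claim.

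Items (c)--(f) are then the familiar ``test against all $y$'' computations: $\langle(T_{1}+T_{2})^{*}x,y\rangle=\langle x,T_{1}y\rangle+\langle x,T_{2}y\rangle=\langle(T_{1}^{*}+T_{2}^{*})x,y\rangle$ for (c); $\langle(T_{1}T_{2})^{*}x,y\rangle=\langle x,T_{1}T_{2}y\rangle=\langle T_{1}^{*}x,T_{2}y\rangle=\langle T_{2}^{*}T_{1}^{*}x,y\rangle$ for (d); $\langle(T_{1}^{*})^{*}x,y\rangle=\langle x,T_{1}^{*}y\rangle=\overline{\langle T_{1}^{*}y,x\rangle}=\overline{\langle y,T_{1}x\rangle}=\langle T_{1}x,y\rangle$ for (e); and $\langle I^{*}x,y\rangle=\langle x,y\rangle$ for (f); in each case $\langle u,y\rangle=\langle v,y\rangle$ for all $y$ forces $u=v$ by non-degeneracy of the inner product. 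Finally (g) follows from (d) and (f): $(T^{-1})^{*}T^{*}=(TT^{-1})^{*}=I^{*}=I$ and $T^{*}(T^{-1})^{*}=(T^{-1}T)^{*}=I$, so $(T^{*})^{-1}=(T^{-1})^{*}$. I expect the only real obstacle to be the preliminary existence and boundedness of the adjoint in the quaternionic setting, together with the need to track the slots of $\langle\cdot,\cdot\rangle$ carefully: since $\mathfrak{H}$ is non-commutative, the conjugation bookkeeping is the one place where a careless step would go wrong, while everything else is routine.
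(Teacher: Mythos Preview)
The paper does not supply its own proof of this theorem: it is quoted as a background result from \cite{p} and is stated without argument, so there is nothing in the paper to compare your proposal against. Your outline is the standard and correct way to establish these facts in the right-quaternionic setting, and the conjugate-symmetry bookkeeping you flag is indeed the only point that requires care.

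Two minor remarks. First, your argument for the product norm in (a) yields $\|T_{1}T_{2}\|\le\|T_{1}\|\,\|T_{2}\|$, which is the true statement; the equality printed in the paper is a typographical slip (as is $(T_{1}^{*})^{*}=T$ in (e), where $T_{1}$ is meant). Second, your appeal to a quaternionic Riesz representation theorem to manufacture $T^{*}$ is fine but, strictly speaking, unnecessary here: the paper takes the relation $\langle T^{*}x,y\rangle=\langle x,Ty\rangle$ as the \emph{definition} of the adjoint, so existence is assumed and only well-definedness, right-linearity, and boundedness need checking, exactly as you do.
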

	
	\begin{theorem}\cite{o}
		Let $\mathbb{H}(\mathfrak{H})$ be a right quaternionic Hilbert space and $T$ be an operator on $\mathbb{H}(\mathfrak{H})$. If $T \geq 0$, then there exists a unique bounded right linear operator on $\mathbb{H}(\mathfrak{H})$, denoted by $\sqrt{T}$, such that $\sqrt{T} \geq 0$ and $\sqrt{T} \sqrt{T} = T$. Furthermore $\sqrt{T}$ commutes with every operator which commute with $T$. Also, if $T$ is invertible and self adjoint then $\sqrt{T}$ is invertible and self adjoint.
	\end{theorem}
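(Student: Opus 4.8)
The plan is to adapt the classical Riesz--Sz.-Nagy iterative construction of the positive square root to the quaternionic setting; the point is that the whole argument only manipulates real-coefficient polynomials in one fixed self adjoint right linear operator, so the non-commutativity of $\mathfrak{H}$ never intervenes. First I would reduce to the case $0 \le T \le I$: if $T = 0$ set $\sqrt{T} = 0$, and otherwise replace $T$ by $T/\pp{T}$ (scaling by the positive real $\pp{T}$, for which left and right multiplication agree) and rescale the resulting square root by $\sqrt{\pp{T}}$. Writing $A = I - T$, so that $A = A^*$ and $0 \le A \le I$, I would define $B_0 = 0$ and $B_{n+1} = \tfrac12(A + B_n^2)$. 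By induction each $B_n$ is a polynomial in $A$ with nonnegative real coefficients and no constant term; since $A \ge 0$ gives $A^k \ge 0$ for every $k$, it follows that $B_n \ge 0$. From the identity $B_{n+1} - B_n = \tfrac12(B_n - B_{n-1})(B_n + B_{n-1})$, the fact that $B_n - B_{n-1}$ is again a polynomial in $A$ with nonnegative coefficients, and the mutual commutativity of the $B_n$, one gets $B_n \le B_{n+1}$; and from $A \le I$ together with $\pp{B_n} \le 1$ one gets $B_{n+1} = \tfrac12(A + B_n^2) \le I$. Hence $\{B_n\}$ is an increasing sequence of self adjoint operators bounded above by $I$.

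Next, invoking the quaternionic monotone convergence theorem for uniformly bounded increasing sequences of self adjoint right linear operators (part of the operator theory developed in \cite{o,p}), $B_n \to B$ strongly for some self adjoint $B$ with $0 \le B \le I$; uniform boundedness yields $B_n^2 \to B^2$ strongly, so $B = \tfrac12(A + B^2)$, that is, $(I - B)^2 = I - A = T$. I would then set $\sqrt{T} := I - B$: it is bounded and right linear, $\sqrt{T} \ge 0$ because $0 \le B \le I$, and $\sqrt{T}\,\sqrt{T} = T$. For the commutation statement, each $B_n$ is a polynomial in $A$, hence in $T$, so $\sqrt{T}$ is a strong limit of polynomials in $T$; any bounded operator commuting with $T$ commutes with all such polynomials and, by continuity, with $\sqrt{T}$.

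For uniqueness, suppose $Q \ge 0$ with $Q^2 = T$. Then $Q$ commutes with $T = Q^2$, hence with $\sqrt{T}$ by the previous step, so $(\sqrt{T} + Q)(\sqrt{T} - Q) = T - Q^2 = 0$. Fixing $x$ and setting $z = (\sqrt{T} - Q)x$, we get $\langle \sqrt{T}z, z\rangle + \langle Qz, z\rangle = \langle (\sqrt{T} + Q)z, z\rangle = 0$; each summand being a nonnegative real, both vanish. Applying the existence part just proved, write $\sqrt{T} = C^2$ and $Q = D^2$ with $C, D \ge 0$ self adjoint; then $\langle \sqrt{T}z, z\rangle = \pp{Cz}^2 = 0$ forces $Cz = 0$, hence $\sqrt{T}z = 0$, and likewise $Qz = 0$, so $(\sqrt{T} - Q)z = 0$, i.e. $(\sqrt{T} - Q)^2 x = 0$; since $\sqrt{T} - Q$ is self adjoint, $\pp{(\sqrt{T} - Q)x}^2 = \langle (\sqrt{T} - Q)^2 x, x\rangle = 0$ for all $x$, whence $Q = \sqrt{T}$. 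Finally, if $T$ is invertible and self adjoint, $\sqrt{T}$ is injective ($\sqrt{T}x = 0 \Rightarrow Tx = 0 \Rightarrow x = 0$) and surjective ($y = T(T^{-1}y) = \sqrt{T}(\sqrt{T}T^{-1}y)$), hence bijective with bounded inverse by the open mapping theorem for quaternionic Banach spaces, and it is self adjoint since positivity of an operator on $\mathbb{H}(\mathfrak{H})$ already entails self adjointness.

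The main obstacle is not a computation but the clean transfer of two analytic facts to the quaternionic framework, where the inner product is $\mathfrak{H}$-valued: the monotone convergence theorem for increasing, uniformly bounded, self adjoint right linear operators, and the implication that a positive operator is self adjoint (equivalently, that $\langle Tx, x\rangle$ is real whenever $T \ge 0$). Once these are quoted from the quaternionic operator theory of \cite{o,p}, every remaining step manipulates only real-coefficient polynomials in the fixed self adjoint operator $A$, and the proof proceeds exactly as in the complex Hilbert space case.
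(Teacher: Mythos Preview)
The paper does not supply a proof of this theorem at all: it is quoted verbatim from \cite{o} as background material, with the citation attached to the theorem heading and no argument given. Consequently there is no ``paper's own proof'' to compare your proposal against.

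That said, your proposal is a sound adaptation of the classical Riesz--Sz.-Nagy iteration and would stand on its own as a proof. The key observation you make --- that every operator manipulated is a \emph{real}-coefficient polynomial in the single self adjoint operator $A$, so quaternionic non-commutativity plays no role --- is exactly the reason the complex proof transfers. The two ingredients you flag as needing to be imported from \cite{o,p} (monotone convergence for increasing uniformly bounded self adjoint right linear operators, and the fact that $T\ge 0$ forces $T=T^*$ via the quaternionic polarization identity) are indeed available there, and once granted, your existence, commutation, uniqueness, and invertibility arguments go through without change. One small point worth tightening: when you assert $A^k\ge 0$ for all $k$, the even case is $(A^m)^*A^m\ge 0$ and the odd case is $\langle A^{2m+1}x,x\rangle=\langle A(A^mx),A^mx\rangle\ge 0$, so you do not need a square root to justify it and the argument is not circular.
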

	
	In the recent years frames theory has been studied in the quaternionic Hilbert spaces. One may refer to \cite{a, j, o,p,r,s,t,u,v}
	\section{$K-$ duals of a $K-$ frame in quaternionic Hilbert spaces}
	
	Throughout this paper, $\mathbb{H}(\mathfrak{H})$ is a seperable right quaternionic Hilbert space, $I$ a countable index set, $\mathbb{B}(\mathbb{H}(\mathfrak{H}))$ denotes the set of all bounded right linear operators on $\mathbb{H}(\mathfrak{H})$. In addition, the range of $K\in\mathbb{B}(\mathbb{H}(\mathfrak{H}))$ is denoted by $R(K)$ and the orthogonal projection of $\mathbb{H}(\mathfrak{H})$ onto a closed subspace $M\subseteq \mathbb{H}(\mathfrak{H})$ is denoted by $P_M$. Also, for $K\in\mathbb{B}(\mathbb{H}(\mathfrak{H}))$, $K^{\dagger}$ denotes the pseudo inverse of $K$.
	
	\begin{definition}
		Let $\{f_k\}_{k\in I}$ be a $K-$ frame for a right quaternionic Hilbert space $\mathbb{H}(\mathfrak{H})$ then a Bessel sequence $\{g_k\}_{k\in I}$ is said to be a $K-$ dual of $\{f_k\}_{k\in I}$ if 
		\begin{align}\label{2}
			Kx= \sum_{k\in I} f_k \langle x, g_k\rangle, \quad x\in \mathbb{H}(\mathfrak{H})
		\end{align}
	\end{definition}
	\begin{remark}\cite{a}
		If $\{f_k\}_{k\in I}$ be a $K-$ frame for a right quaternionic Hilbert space $\mathbb{H}(\mathfrak{H})$ then there exists a Bessel sequence $\{g_k\}_{k\in I}$ satisfying \eqref{2} and \begin{align}
			K^*x= \sum_{k\in I} g_k \langle x, f_k\rangle, \quad x\in \mathbb{H}(\mathfrak{H})
		\end{align}
	\end{remark}
	
	\begin{lemma}\cite{a}\label{a}
		Let $\{f_k\}_{k\in I}$ and $\{g_k\}_{k\in I}$ be two Bessel sequences for the right quaternionic Hilbert space $\mathbb{H}(\mathfrak{H})$ satisfying \eqref{2}. Then $\{f_k\}_{k\in I}$ and $\{g_k\}_{k\in I}$ are $K$ and $K^*-$ frames, respectively.  
	\end{lemma}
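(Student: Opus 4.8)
The plan is to recast the reconstruction identity \eqref{2} as an operator factorization of $K$ and then extract the two frame inequalities from the assumed Bessel bounds. Write $B_F$ and $B_G$ for Bessel bounds of $\{f_k\}_{k\in I}$ and $\{g_k\}_{k\in I}$. Since both are Bessel sequences, their synthesis operators $T_F,T_G\colon\ell^2(\mathfrak{H})\to\mathbb{H}(\mathfrak{H})$, given by $T_F(\{c_k\})=\sum_{k\in I}f_kc_k$ and analogously for $T_G$, are well defined bounded right linear operators with $\|T_F\|\le\sqrt{B_F}$ and $\|T_G\|\le\sqrt{B_G}$; the convergence of the defining series and these norm bounds follow, exactly as in the complex case, from the Cauchy--Schwarz inequality in $\ell^2(\mathfrak{H})$ together with the Bessel estimates. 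Their adjoints $T_F^{*},T_G^{*}$ are the analysis operators, so that $\|T_F^{*}x\|^2=\sum_{k\in I}|\langle x,f_k\rangle|^2$ and $\|T_G^{*}x\|^2=\sum_{k\in I}|\langle x,g_k\rangle|^2$ for all $x\in\mathbb{H}(\mathfrak{H})$. Unwinding these definitions, identity \eqref{2} is precisely the statement $K=T_FT_G^{*}$. Note that the upper frame inequalities for $\{f_k\}$ and for $\{g_k\}$ are just the hypothesised Bessel bounds, so only the two lower bounds need an argument.

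For the lower bound of $\{f_k\}$: taking adjoints in $K=T_FT_G^{*}$ and using $(T_1T_2)^{*}=T_2^{*}T_1^{*}$ and $(T_1^{*})^{*}=T_1$, one gets $K^{*}=T_GT_F^{*}$. Hence, for every $x\in\mathbb{H}(\mathfrak{H})$,
\begin{align*}
 \|K^{*}x\|=\|T_GT_F^{*}x\|\le\|T_G\|\,\|T_F^{*}x\|\le\sqrt{B_G}\,\Big(\sum_{k\in I}|\langle x,f_k\rangle|^2\Big)^{1/2},
\end{align*}
so $\tfrac{1}{B_G}\|K^{*}x\|^2\le\sum_{k\in I}|\langle x,f_k\rangle|^2$; together with the Bessel bound $B_F$ this shows $\{f_k\}_{k\in I}$ is a $K$-frame with bounds $1/B_G$ and $B_F$.

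For the lower bound of $\{g_k\}$: apply the same estimate directly to $K=T_FT_G^{*}$, obtaining for every $x\in\mathbb{H}(\mathfrak{H})$
\begin{align*}
 \|Kx\|=\|T_FT_G^{*}x\|\le\|T_F\|\,\|T_G^{*}x\|\le\sqrt{B_F}\,\Big(\sum_{k\in I}|\langle x,g_k\rangle|^2\Big)^{1/2},
\end{align*}
so $\tfrac{1}{B_F}\|Kx\|^2\le\sum_{k\in I}|\langle x,g_k\rangle|^2$. Since $(K^{*})^{*}=K$, this is exactly the lower $K^{*}$-frame inequality, and with the Bessel bound $B_G$ it shows $\{g_k\}_{k\in I}$ is a $K^{*}$-frame.

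I expect the only genuinely delicate point to be the quaternionic bookkeeping behind the identity $K=T_FT_G^{*}$ and the right linearity of $T_F^{*},T_G^{*}$: because scalars act on the right and the inner product is conjugate linear in its first slot, one must be careful about the placement of the quaternionic coefficients and their conjugates when writing out the synthesis and analysis operators and composing them, and about checking that the analysis operators so defined are indeed right linear and that their $\ell^2$-norms coincide with $\big(\sum_{k\in I}|\langle x,f_k\rangle|^2\big)^{1/2}$. Once the conventions are fixed consistently this is routine, and no new phenomenon arises compared with the complex Hilbert space case; everything else reduces to the elementary operator-norm estimates above.
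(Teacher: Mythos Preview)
Your proof is correct and follows essentially the same approach as the paper: both arguments extract the lower $K^*$-frame bound for $\{g_k\}$ from the Bessel bound of $\{f_k\}$ (and symmetrically for $\{f_k\}$), yielding the identical constants $B_F^{-1}$ and $B_G^{-1}$. The only difference is cosmetic: the paper writes out the estimate via a direct Cauchy--Schwarz computation on $\|Kx\|^4=|\langle\sum_k f_k\langle x,g_k\rangle,Kx\rangle|^2$, whereas you package the same inequality as the operator factorization $K=T_FT_G^{*}$ together with $\|T_F\|\le\sqrt{B_F}$.
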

	\begin{proof}
		For $x\in \mathbb{H}(\mathfrak{H})$, we have
		\begin{align*}
			||Kx||^4&=|\langle Kx, Kx \rangle|^2 \\ 
			&=\bigg|\biggl< \sum_{k\in I}f_k \langle x, g_k \rangle , Kx \biggr> \bigg|^2  \\
			&\leq \sum_{k\in I}|\langle Kx, f_k \rangle|^2 \sum_{k\in I} |\langle x, g_k \rangle|^2 \\
			&\leq B||Kx||^2 \sum_{k\in I} |\langle x, g_k \rangle|^2 
		\end{align*}
		where $B$ is an upper bound of the bessel sequence $\{f_k\}_{k\in I}$.
		Hence
		\begin{align*}
			B^{-1}||Kx||^2 \leq \sum_{k\in I} |\langle x, g_k \rangle|^2 
		\end{align*}
		
		This shows that $\{g_k\}_{k\in I}$ is a $K^*-$ frame with lower bound $B^{-1}$. In the same manner, we can obtain the lower bound of $\{f_k\}_{k\in I}$ by repeating the above argument for $K^*$ instead of $K$.
	\end{proof}
	
	\begin{example}
		Let $\mathbb{H}(\mathfrak{H})=\mathfrak{H}^4$ and $\{e_1,e_2,e_3,e_4\}$ be orthonormal basis of $\mathbb{H}(\mathfrak{H})$. Define $K:\mathbb{H}(\mathfrak{H})\to\mathbb{H}(\mathfrak{H})$ by 
		\begin{align*}
			Ke_1=e_1=f_1\\
			Ke_2=e_1=f_2\\
			Ke_3=e_2=f_3\\
			Ke_4=e_3=f_4
		\end{align*}
		then $\{f_1,f_2,f_3,f_4\}$ is a $K-$ frame. 
		
		Now, we compute 
		
		\begin{align*}
			Kx&=\sum_{k=1}^4 Ke_k \langle x, e_k\rangle \\
			&=\sum_{k=1}^4 f_k \langle x ,e_k\rangle 
		\end{align*}
		Hence, $\{e_k\}_{k=1}^4$ is a $K-$ dual of $\{f_k\}.$ Moreover, we have
		\begin{align*}
			K^*x&=\sum_{k=1}^4 e_k \langle K^*x, e_k\rangle \\
			&=\sum_{k=1}^4 e_k \langle x, Ke_k\rangle \\
			&=\sum_{k=1}^4 e_k \langle x, f_k\rangle 
		\end{align*}
		Therefore, $||K^*x||^2=\sum_{k=1}^4|\langle x, f_k|^2$ .i.e $\{f_k\}$ is a Parseval $K-$ frame. 
		
		Take  $h_1=(1,2,0,0), h_2=(0,-1,0,0), h_3=(0,0,1,0), h_4=(0,0,0,1)$. So, we compute 
		\begin{align*}
			\sum_{k=1}^4 f_k \langle x, h_k\rangle &= e_1 \langle x, e_1+2e_2\rangle +e_1 \langle x, -e_2\rangle +e_2\langle x, e_3\rangle +e_3\langle x, e_4\rangle \\
			&= \sum_{k=1}^4 f_k\langle x, e_k\rangle \\&=Kx
		\end{align*}
		Therefore, $\{h_k\}_{k=1}^4$ is also a $K-$ dual of $\{f_k\}_{k=1}^4$ which is different from $\{e_k\}_{k=1}^4$. 
	\end{example}
	
	\begin{theorem}
		Let $\mathbb{H}(\mathfrak{H})$ be a right quaternionic Hilbert space and $K$ be a bounded right linear operator on $\mathbb{H}(\mathfrak{H})$ with closed range. Let $\{f_k\}_{k\in I}$ be a $K-$ frame for $\mathbb{H}(\mathfrak{H})$ with lower and upper bounds $A$ and $B$, respectively. Let S be the corresponding frame operator. Then $\{K^* S^{-1}P_{S(R(K))}f_k\}_{k\in I}$ is a $K-$ dual of $\{P_{R(K)}f_k\}_{k\in I}$ with bounds $B^{-1}$ and $A^{-1}||K||^2||K^{+}||^2$, respectively.
	\end{theorem}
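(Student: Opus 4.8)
The plan is to verify the three ingredients in the definition of a $K$-dual: that $\{g_k\}_{k\in I}$ is Bessel, that $\{g_k\}_{k\in I}$ reconstructs $K$ against $\{P_{R(K)}f_k\}_{k\in I}$, and that the asserted frame bounds hold. Since $R(K)$ is closed, $\widetilde{S}:=S|_{R(K)}\colon R(K)\to S(R(K))$ is invertible; write $S^{-1}$ for its inverse $S(R(K))\to R(K)$ and set
\[
L:=S^{-1}P_{S(R(K))}\in\mathbb{B}(\mathbb{H}(\mathfrak{H})),\qquad R(L)\subseteq R(K).
\]
Straight from the definitions one has the two one-sided identities
\[
SL=P_{S(R(K))},\qquad LSm=m\quad(m\in R(K)),
\]
the first because $S$ coincides with $\widetilde{S}$ on $R(K)\ni S^{-1}P_{S(R(K))}x$, the second because $Sm=\widetilde{S}m\in S(R(K))$ for $m\in R(K)$. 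With this notation $g_k=K^{*}Lf_k$, and the sequence to be dualised is $\{P_{R(K)}f_k\}_{k\in I}$, which is itself a $K$-frame (alternatively this drops out of Lemma~\ref{a} below).

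For the reconstruction, the definition of the adjoint gives $\langle x,g_k\rangle=\langle x,K^{*}Lf_k\rangle=\langle Kx,Lf_k\rangle=\langle L^{*}Kx,f_k\rangle$ for every $x$. Since $\{f_k\}_{k\in I}$ is Bessel the series below converges in norm, and since $P_{R(K)}$ is a bounded right-linear operator it passes through the sum; recognising the frame operator $\sum_{k\in I}f_k\langle w,f_k\rangle=Sw$ we obtain
\begin{align*}
\sum_{k\in I}P_{R(K)}f_k\,\langle x,g_k\rangle
 &=\sum_{k\in I}P_{R(K)}f_k\,\langle L^{*}Kx,f_k\rangle\\
 &=P_{R(K)}\sum_{k\in I}f_k\,\langle L^{*}Kx,f_k\rangle
   =P_{R(K)}SL^{*}Kx .
\end{align*}
Both $P_{R(K)}SL^{*}Kx$ and $Kx$ lie in $R(K)$, so it suffices to pair them with an arbitrary $m\in R(K)$; using $S=S^{*}$ and $LSm=m$,
\begin{align*}
\langle P_{R(K)}SL^{*}Kx,\,m\rangle
 &=\langle SL^{*}Kx,\,m\rangle
   =\langle L^{*}Kx,\,Sm\rangle\\
 &=\langle Kx,\,LSm\rangle
   =\langle Kx,\,m\rangle .
\end{align*}
Hence $Kx=\sum_{k\in I}P_{R(K)}f_k\langle x,g_k\rangle$ for all $x\in\mathbb{H}(\mathfrak{H})$.

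It remains to treat the bounds. From $\langle x,g_k\rangle=\langle L^{*}Kx,f_k\rangle$ and the Bessel bound $B$ of $\{f_k\}_{k\in I}$,
\[
\sum_{k\in I}|\langle x,g_k\rangle|^{2}\le B\,\|L^{*}Kx\|^{2}\le B\,\|L\|^{2}\|K\|^{2}\|x\|^{2},
\]
so $\{g_k\}_{k\in I}$ is Bessel; combined with the reconstruction identity, Lemma~\ref{a} then shows $\{P_{R(K)}f_k\}_{k\in I}$ is a $K$-frame, $\{g_k\}_{k\in I}$ is a $K^{*}$-frame, and its lower bound may be taken to be $B^{-1}$ (the Bessel bound of $\{P_{R(K)}f_k\}_{k\in I}$ being at most $B$). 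For the upper bound one substitutes the estimate recalled in the introduction, $\|L\|=\|S^{-1}|_{S(R(K))}\|\le A^{-1}\|K^{\dagger}\|^{2}$, together with $\|Kx\|\le\|K\|\,\|x\|$. I expect the genuine difficulty to be concentrated here. On the structural side, one must scrupulously keep $R(K)$ and $S(R(K))$ apart: they are different subspaces in general, so $L$ is not self-adjoint and the familiar ``$S^{-1}$ is self-adjoint'' manipulation is unavailable — only the one-sided relations $SL=P_{S(R(K))}$, $LSm=m$ are, which is precisely what makes the reconstruction go through. On the quantitative side, the crude estimate just given only yields an upper bound of order $B\,A^{-2}\|K\|^{2}\|K^{\dagger}\|^{4}$, so obtaining the sharper constant $A^{-1}\|K\|^{2}\|K^{\dagger}\|^{2}$ stated above requires a finer analysis of $\sum_{k\in I}|\langle Kx,S^{-1}P_{S(R(K))}f_k\rangle|^{2}$ that exploits the lower $K$-frame inequality for $\{f_k\}_{k\in I}$, not merely the operator norm of $S^{-1}$.
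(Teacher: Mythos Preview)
Your reconstruction argument and the derivation of the lower bound $B^{-1}$ via Lemma~\ref{a} match the paper's proof, and your version is in fact more careful about the distinction between $R(K)$ and $S(R(K))$: the paper writes the reconstruction directly as $Kx=(S^{-1}S)^{*}Kx=S^{*}(S^{-1})^{*}Kx=S^{*}P_{S(R(K))}(S^{-1})^{*}Kx$ and then expands $S^{*}$ through the frame sum, which amounts to your computation once the symbols are untangled.

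The gap is exactly where you flag it: you do not prove the upper bound $A^{-1}\|K\|^{2}\|K^{\dagger}\|^{2}$, only observe that the crude $\|L\|$-estimate yields $BA^{-2}\|K\|^{2}\|K^{\dagger}\|^{4}$. The paper's route to the sharper constant is to bypass the operator-norm bound on $S$ and instead use the frame-operator identity
\[
\sum_{k\in I}\bigl|\langle (S^{-1})^{*}Kx,\,f_k\rangle\bigr|^{2}
=\bigl\langle S(S^{-1})^{*}Kx,\,(S^{-1})^{*}Kx\bigr\rangle ,
\]
then assert that this equals $\langle Kx,\,(S^{-1})^{*}Kx\rangle$; Cauchy--Schwarz together with the estimate $\|(S^{-1})^{*}Kx\|\le A^{-1}\|K^{\dagger}\|^{2}\|Kx\|$ (which the paper derives separately from $\|(S^{-1})^{*}y\|^{2}=\langle S^{-1}(S^{-1})^{*}y,\,y\rangle$ for $y\in R(K)$) then gives the stated constant. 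Your own warning is pertinent here: the cancellation $\langle S(S^{-1})^{*}Kx,(S^{-1})^{*}Kx\rangle=\langle Kx,(S^{-1})^{*}Kx\rangle$ would follow at once if $(S^{-1})^{*}Kx$ lay in $R(K)$, since then $S^{-1}S$ acts as the identity on it; but in your notation $(S^{-1})^{*}Kx=L^{*}Kx\in S(R(K))$, and the two subspaces differ in general. The paper offers no further justification for that step, so the ``finer analysis'' you anticipated is indeed the crux, and it is not fully resolved in the paper's argument either.
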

	
	\begin{proof}
		We know that $S$ is a bounded right linear operator and invertible on $R(K)$. 
		Also $S=TT^*$ is self adjoint on $\mathbb{H}(\mathfrak{H})$ and $S^{-1}S |_{R(K)}=I_{R(K)}$. Therefore $\{K^* S^{-1}P_{S(R(K))}f_k\}_{k\in I}$ is a Bessel sequence in $\mathbb{H}(\mathfrak{H})$. Also, we have 
		\begin{align*}
			Kx=(S^{-1}S)^*Kx&=S^*(S^{-1})^*Kx\\
			&=S^*P_{S(R(K))}(S^{-1})^*Kx\\
			&=\sum_{k\in I } P_{R(K)}f_k\langle  P_{S(R(K))}(S^{-1})^*Kx , f_k\rangle \\
			&=\sum_{k\in I }P_{R(K)}f_k\langle  x , K^* S^{-1}P_{S(R(K))}f_k\rangle 
		\end{align*}
		Hence, $\{K^* S^{-1}P_{S(R(K))}f_k\}_{k\in I}$ is a $K-$ dual of $\{P_{R(K)}f_k\}_{k\in I}$ with lower bound $B^{-1}$ by lemma \ref{a}. Also, we have for $x\in R(K)$
		\begin{align*}
			||(S^{-1})^*x||^2&=\langle S^{-1}(S^{-1})^*x,x\rangle\\
			&\le A^{-1}||K^{\dagger}||^2||(S^{-1})^*x|| ||x||
		\end{align*}
		So, we have 
		\begin{align*}
			\sum_{k\in I}|\langle x,  K^* S^{-1}P_{S(R(K))}f_k\rangle|^2 &=\sum_{k\in I}|\langle (S^{-1})^*Kx, f_k\rangle|^2\\ &=\langle S(S^{-1})^*Kx, (S^{-1})^*Kx \rangle \\ &= \langle Kx, (S^{-1})^*Kx \rangle \\
			&\le A^{-1}||K||^2||K^{\dagger}||^2||x||^2, x\in \mathbb{H}(\mathfrak{H}).
		\end{align*}
		Hence the results.
	\end{proof}
	
	\begin{remark}
		We define the canonical $K-$ dual as the $K-$ dual $\{K^* S^{-1}P_{S(R(K))}f_k\}_{k\in I}$ of $\{P_{R(K)}f_k\}_{k\in I}$ obtained in Theorem 2.3. Clearly this canonical $K-$ dual is same as the ordinary canonical dual if $\{f_k\}_{k\in I}$ is an ordinary frame.
	\end{remark}

	Next theorem is the characterization of all $K-$ duals of $\{P_{R(K)}f_k\}_{k\in I}$ in terms of the canonical $K-$ dual of  $\{P_{R(K)}f_k\}_{k\in I}$.
	
	\begin{theorem}
		Let $K$ be a bounded operator on a right quaternionic Hilbert space $\mathbb{H}(\mathfrak{H})$ with closed range, and let $\{f_k\}_{k\in I}$ be a $K-$ frame for $\mathbb{H}(\mathfrak{H})$. Then, $\{g_k\}_{k \in I}$ is a $K-$ dual of $\{P_{R(K)}f_k\}_{k\in I}$ if and only if there exists a bounded operator $M: \mathbb{H}(\mathfrak{H}) \to \ell ^2(\mathfrak{H})$ such that $P_{R(K)}T_FM=0$ and
		\begin{align*}
			g_k= K^* S_F^{-1}P_{S_F(R(K))}f_k + M^*e_k, \quad k \in I
		\end{align*}
		where $\{e_k\}_{k \in I}$ is the standard orthonormal basis of $\ell ^2(\mathfrak{H})$ and $T_F$ is the synthesis operator of $\{f_k\}_{k\in I}$.
	\end{theorem}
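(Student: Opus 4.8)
The plan is to carry over the classical parametrization of all dual frames to the right quaternionic $K$-frame setting, using the canonical $K$-dual produced in the preceding theorem as the base point. Put $h_k:=K^*S_F^{-1}P_{S_F(R(K))}f_k$ for $k\in I$. Since $R(K)$ is closed, $S_F$ is invertible on $R(K)$, so the preceding theorem applies and $\{h_k\}_{k\in I}$ is a Bessel sequence which is a $K$-dual of $\{P_{R(K)}f_k\}_{k\in I}$, i.e.
\begin{align*}
\sum_{k\in I}(P_{R(K)}f_k)\langle x,h_k\rangle=Kx,\qquad x\in\mathbb{H}(\mathfrak{H}).
\end{align*}
Two preliminary remarks are convenient. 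First, the synthesis operator of $\{P_{R(K)}f_k\}_{k\in I}$ is $P_{R(K)}T_F$, because the bounded right linear operator $P_{R(K)}$ passes through the series defining $T_F$. Second, for every bounded $M:\mathbb{H}(\mathfrak{H})\to\ell^2(\mathfrak{H})$ the family $\{M^*e_k\}_{k\in I}$ is Bessel with bound $\|M\|^2$, since $\sum_{k\in I}|\langle x,M^*e_k\rangle|^2=\sum_{k\in I}|\langle Mx,e_k\rangle|^2=\|Mx\|^2$ by the Parseval identity for the orthonormal basis $\{e_k\}_{k\in I}$.

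For the ``if'' implication, assume $M$ is bounded, $P_{R(K)}T_FM=0$ and $g_k=h_k+M^*e_k$. Then $\{g_k\}_{k\in I}$ is a sum of two Bessel sequences, hence Bessel, and for $x\in\mathbb{H}(\mathfrak{H})$,
\begin{align*}
\sum_{k\in I}(P_{R(K)}f_k)\langle x,g_k\rangle
&=\sum_{k\in I}(P_{R(K)}f_k)\langle x,h_k\rangle+\sum_{k\in I}(P_{R(K)}f_k)\langle x,M^*e_k\rangle\\
&=Kx+P_{R(K)}T_F(Mx)\\
&=Kx+(P_{R(K)}T_FM)x=Kx,
\end{align*}
where the second equality uses the canonical $K$-dual identity together with the identification of the coefficient sequence $\{\langle x,M^*e_k\rangle\}_{k\in I}$ with $Mx\in\ell^2(\mathfrak{H})$. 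Thus $\{g_k\}_{k\in I}$ is a $K$-dual of $\{P_{R(K)}f_k\}_{k\in I}$.

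For the ``only if'' implication, let $\{g_k\}_{k\in I}$ be a $K$-dual of $\{P_{R(K)}f_k\}_{k\in I}$; it is Bessel by definition, so $\{g_k-h_k\}_{k\in I}$ is Bessel and therefore has a bounded synthesis operator $N:\ell^2(\mathfrak{H})\to\mathbb{H}(\mathfrak{H})$, $N(\{c_k\}_{k\in I})=\sum_{k\in I}(g_k-h_k)c_k$. Set $M:=N^*$; then $M$ is bounded, $M^*=N$, and $M^*e_k=Ne_k=g_k-h_k$, i.e. $g_k=h_k+M^*e_k$. Finally, identifying $Mx$ with its coefficient sequence $\{\langle x,g_k-h_k\rangle\}_{k\in I}$,
\begin{align*}
(P_{R(K)}T_FM)x
&=P_{R(K)}T_F(Mx)=P_{R(K)}\sum_{k\in I}f_k\langle x,g_k-h_k\rangle\\
&=\sum_{k\in I}(P_{R(K)}f_k)\langle x,g_k\rangle-\sum_{k\in I}(P_{R(K)}f_k)\langle x,h_k\rangle\\
&=Kx-Kx=0,
\end{align*}
since both $\{g_k\}_{k\in I}$ and $\{h_k\}_{k\in I}$ are $K$-duals of $\{P_{R(K)}f_k\}_{k\in I}$. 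Hence $P_{R(K)}T_FM=0$, as required.

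The step I expect to require real care, rather than the (essentially classical) structure above, is the non-commutative bookkeeping: one has to keep the quaternionic scalars on the side for which the synthesis operators remain right linear, invoke the adjoint identity $\langle T^*x,y\rangle=\langle x,Ty\rangle$ in the form recorded in Section~2, and match the coefficient/coordinate identifications in $\ell^2(\mathfrak{H})$ consistently, so that ``$M^*e_k=g_k-h_k$'' on the one hand and ``$P_{R(K)}T_FM=0$'' on the other are genuinely two sides of one adjoint relation. Beyond that, the only external ingredient is the preceding theorem, which is precisely where closedness of $R(K)$ is used.
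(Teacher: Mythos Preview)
Your proof is correct and follows essentially the same route as the paper: both use the canonical $K$-dual $h_k=K^*S_F^{-1}P_{S_F(R(K))}f_k$ from the preceding theorem as base point, verify the ``if'' direction by splitting the reconstruction sum, and for ``only if'' take $M$ to be the analysis operator of the difference sequence $\{g_k-h_k\}$ (the paper writes this same operator explicitly as $M=T_G^*-T_F^*P_{S_F(R(K))}(S_F^{-1})^*K$ and then checks $M^*e_k=g_k-h_k$ by hand). The arguments are otherwise identical.
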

	
	\begin{proof}
		
		Suppose $M:\mathbb{H}(\mathfrak{H}) \to \ell ^2(\mathfrak{H})$ be a bounded operator such that $P_{R(K)}T_FM=0$.
		We can see that $\{   g_k\}_{k\in I}$= $\{K^* S_F^{-1}P_{S_F(R(K))}f_k + M^*e_k\}_{k\in I}$ is a Bessel sequence; in fact, 
		\begin{align*}
			\sum_{k\in I}|\langle x, K^* S_F^{-1}P_{S_F(R(K))}f_k + M^*e_k\rangle |^2&\le 2\left(\sum_{k\in I} |\langle x, K^* S_F^{-1}P_{S_F(R(K))}f_k\rangle|^2 +\sum_{k\in I}|\langle x,M^*e_k\rangle |^2\right)\\
			&\le 2(A^{-1}||K||^2||K^{\dagger}||^2+||M||^2)||x||^2,
		\end{align*}
		for all $x\in \mathbb{H}(\mathfrak{H})$, where $A$ is a lower bound for $\{f_k\}_{k\in I}$. Also
		\begin{align*}
			\sum_{k\in I} P_{(R(K)}f_k\langle x, K^* S_F^{-1}P_{S_F(R(K))}f_k +M^*e_k\rangle  &=\sum_{k\in I}P_{(R(K)}f_k\langle x, K^* S_F^{-1}P_{S_F(R(K))}f_k\rangle \\ &\ \ \ \ +\sum_{k\in I}P_{(R(K)}f_k\langle x, M^*e_k\rangle \\
			&=Kx+ P_{R(K)}T_FMx\\
			&=Kx
		\end{align*}
		Hence, $\{   g_k\}_{k\in I}$= $\{K^* S_F^{-1}P_{S_F(R(K))}f_k + M^*e_k\}_{k\in I}$ is a $K-$ dual of $\{P_{R(K)}f_k\}_{k\in  I}$. Conversely, let $\{g_k\}_{k\in I}$ be a $K-$ dual of $\{P_{R(K)}f_k\}_{k\in I}$. Define
		\begin{align*}
			M=T_G^*-T_F^*P_{S_F(R(K))}(S_F^{-1})^*K
		\end{align*}
		Then $M: \mathbb{H}(\mathfrak{H}) \to \ell ^2(\mathfrak{H})$ is a bounded operator and
		\begin{align*}
			P_{R(K)}T_FMx&=P_{R(K)}T_FT_G^*x-(S_F)^*(S_F^{-1})^*Kx\\
			&=\sum_{k\in I}P_{R(K)}f_k \langle x, g_k \rangle -Kx=0,
		\end{align*}
		for every $x\in \mathbb{H}(\mathfrak{H})$. Also
		\begin{align*}
			K^* S_F^{-1}P_{S_F(R(K))}f_k + M^*e_k&=   K^* S_F^{-1}P_{S_F(R(K))}f_k+T_Ge_k-K^*S_F^{-1}P_{S_F(R(K))}T_Fe_k\\
			&= K^* S_F^{-1}P_{S_F(R(K))}f_k+T_Ge_k-K^*S_F^{-1}P_{S_F(R(K))}f_k\\
			&=g_k
		\end{align*}
		for all $k\in I$. Hence the result.
	\end{proof}
	
	In the following theorem, we characterize all $K-$ duals of a $K-$ frame when the range of $K \in B(\mathbb{H}(\mathfrak{H}))$ is not necessarily  closed.
	
	\begin{theorem}
		Let $\{f_k\}_{k\in I}$ be a $K-$ frame for $\mathbb{H}(\mathfrak{H})$. Then, $\{g_k\}_{k \in I}$ is a $K-$ dual of $\{f_k\}_{k\in I}$ if and only if there exists a bounded operator $M:\ell ^2(\mathfrak{H}) \to  \mathbb{H}(\mathfrak{H})$ such that $T_FM^*=K$ and 
		\begin{align*}
			g_k=Me_k, \quad k \in I
		\end{align*}
		where $\{e_k\}_{k \in I}$ is the standard orthonormal basis of $\ell ^2(\mathfrak{H})$ and $T_F$ is the synthesis operator of $\{f_k\}_{k\in I}$.
		
	\end{theorem}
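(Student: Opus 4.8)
\emph{Proof proposal.} The plan is to treat this as the non-closed-range counterpart of the preceding characterization: here no projection onto $R(K)$ intervenes, so the argument is actually shorter, and the key move is to identify the operator $M$ in the statement with the synthesis operator of the candidate $K$-dual.

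For the ``if'' direction I would start from a bounded $M:\ell^2(\mathfrak{H})\to\mathbb{H}(\mathfrak{H})$ with $T_FM^*=K$ and set $g_k:=Me_k$. First I would check that $\{g_k\}_{k\in I}$ is a Bessel sequence: since $\{e_k\}_{k\in I}$ is a Hilbert basis of $\ell^2(\mathfrak{H})$, the Parseval identity of Theorem \ref{c}(b) applied to $M^*x$ yields
\begin{align*}
    \sum_{k\in I}|\langle x,g_k\rangle|^2=\sum_{k\in I}|\langle M^*x,e_k\rangle|^2=\|M^*x\|^2\le\|M\|^2\,\|x\|^2,\qquad x\in\mathbb{H}(\mathfrak{H}).
\end{align*}
Then I would expand $M^*x=\sum_{k\in I}e_k\langle x,g_k\rangle$ in the Hilbert basis, apply the bounded (hence continuous) operator $T_F$ term by term using $T_Fe_k=f_k$, and conclude $T_FM^*x=\sum_{k\in I}f_k\langle x,g_k\rangle$. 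Since $T_FM^*=K$, this is precisely the defining relation \eqref{2}, so $\{g_k\}_{k\in I}$ is a $K$-dual of $\{f_k\}_{k\in I}$ (and, by Lemma \ref{a}, a $K^*$-frame as well).

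For the ``only if'' direction, given a $K$-dual $\{g_k\}_{k\in I}$ of $\{f_k\}_{k\in I}$, I would use that a Bessel sequence has a bounded synthesis operator $T_G:\ell^2(\mathfrak{H})\to\mathbb{H}(\mathfrak{H})$ and simply take $M:=T_G$. Then $Me_k=T_Ge_k=g_k$ for every $k\in I$, while $M^*=T_G^*$ is the analysis operator $x\mapsto\{\langle x,g_k\rangle\}_{k\in I}$, so that $T_FM^*x=\sum_{k\in I}f_k\langle x,g_k\rangle=Kx$ for all $x$ by \eqref{2}; hence $T_FM^*=K$, which is exactly what is required. This exhibits the operator $M$ and closes the equivalence.

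I do not expect a serious obstacle here; the essential content is the identification $M=T_G$. The points needing care are purely formal: keeping every quaternionic scalar on the right throughout, so that $T_F$, $M$ and $M^*$ are genuinely right linear and the reconstruction formula reads $Kx=\sum_k f_k\langle x,g_k\rangle$ rather than $\sum_k\langle x,g_k\rangle f_k$; and justifying the interchange of the bounded operator $T_F$ with the series $M^*x=\sum_k e_k\langle x,g_k\rangle$, which is legitimate because that series converges in $\ell^2(\mathfrak{H})$ by the Bessel bound.
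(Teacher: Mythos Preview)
Your proposal is correct and follows essentially the same approach as the paper: in both directions the key identification is $M=T_G$, and the reconstruction identity $T_FM^*x=\sum_{k\in I}f_k\langle x,g_k\rangle=Kx$ is verified exactly as you do. You supply a bit more detail than the paper (the explicit Bessel bound via Parseval and the remarks on right-linearity and the interchange of $T_F$ with the series), but there is no substantive difference in strategy.
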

	
	\begin{proof}
		Let $\{g_k\}_{k\in I}$ be a $K-$ dual of the $K-$ frame $\{f_k\}_{k\in I}$. Then 
		\begin{align*}
			Kx=\sum_{k\in I}f_k\langle x, g_k\rangle 
		\end{align*}
		Suppose $T_G$ is the synthesis operator of $\{g_k\}_{k\in I}$. Put $M=T_G$. Then $T_FM^*x=T_F(\{\langle x, g_k\rangle\})=\sum_{k \in I}f_k \langle x, g_k\rangle= Kx,\quad x\in \mathbb{H}(\mathfrak{H}) $. This gives $T_FM^*=K$. Also $Me_k=T_Ge_k=g_k$, for all $k\in I$.  
		
		Conversely, if $M:\ell ^2(\mathfrak{H}) \to  \mathbb{H}(\mathfrak{H})$ is a bounded linear operator such that $T_FM^*=K$.  Write $g_k=Me_k$. Then $\{g_k\}_{k\in I}$ is a Bessel sequence and we have 
		\begin{align*}
			\sum_{k \in I}f_k\langle x, g_k\rangle=\sum_{k \in I}f_k\langle M^*x, e_k\rangle=T_FM^*x=Kx 
		\end{align*}
		Thus $\{g_k\}_{k\in I}$ is a $K-$ dual of the $K-$ frame $\{f_k\}_{k\in I}$.
	\end{proof}

	In \cite{g} it has been proved that $\{f_k\}_{k\in I}$ and $\{g_k\}_{k\in I}$ are Bessel sequences satisfying \eqref{2} and $K$ is a bounded operator on $\mathbb{H}(\mathfrak{H})$ with closed range, then there exists a sequence $\{h_k\}_{k\in I}=\{(K^{\dagger}|_{R(K)})^*g_k\}_{k\in I}$ 
	such that 
	\begin{align}\label{3}
		x= \sum_{k\in I}f_k\langle x,h_k\rangle, \quad  x\in R(K) 
	\end{align}
	Also, $x= \sum_{k\in I}h_k\langle x,f_k\rangle, \quad  x\in R(K) $. In the following result, we give a characterization of sequence $\{h_k\}_{k\in I}$ in \eqref{3}

	\begin{theorem}
		Let $K:\mathbb{H}(\mathfrak{H})\to\mathbb{H}(\mathfrak{H})$ be a bounded linear operator with closed range, $\{f_k\}_{k\in I}$ be a $K-$ frame for $\mathbb{H}(\mathfrak{H})$ and $\{g_k\}_{k\in I}$ be  a $K-$ dual of $\{f_k\}_{k\in I}$.  Then $\{h_k\}_{k\in I}=\{(K^{\dagger}|_{R(K)})^*g_k\}_{k\in I}$ satisfies 
		\begin{align*}
			x= \sum_{k\in I}f_k\langle x,h_k\rangle, \quad  x\in R(K)   
		\end{align*}
		if and only if $\{h_k\}_{k\in I}=\{Me_k\}_{k\in I}$, where $\{e_k\}_{k\in I}$ is the orthonormal basis of $\ell^2(\mathbb{H}(\mathfrak{H}))$ and $M:\ell^2(\mathbb{H}(\mathfrak{H}))\to\ R(K)$ is a left inverse of $T_F^*|_{R(K)}$.  
	\end{theorem}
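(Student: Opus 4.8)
\emph{Proof proposal.} The plan is to realise $M$ as a corestriction of the synthesis operator $T_H$ of $\{h_k\}_{k\in I}$ and then to translate the reconstruction identity on $R(K)$ into the statement that this operator is a left inverse of $T_F^{*}|_{R(K)}$. Two preliminary facts would be recorded first. (i) $\{h_k\}_{k\in I}$ is a Bessel sequence: it is the image of the Bessel sequence $\{g_k\}_{k\in I}$ (a $K$-dual is Bessel by definition) under the bounded operator $(K^{\dagger}|_{R(K)})^{*}$, whose range is contained in $R(K)$ since it is the adjoint of an operator with domain $R(K)$; hence every $h_k$ lies in the closed subspace $R(K)$, the synthesis operator $T_H$ takes values in $R(K)$, and I write $\widehat T_H:\ell^{2}(\mathfrak{H})\to R(K)$ for the corestriction, so $\widehat T_H e_k=h_k$. (ii) $T_F^{*}|_{R(K)}$ is bounded below: from the lower $K$-frame bound $\sum_k|\langle x,f_k\rangle|^{2}\ge A\|K^{*}x\|^{2}$ and the fact that $K^{*}$ is bounded below on $R(K)=N(K^{*})^{\perp}$ (its range is closed because $R(K)$ is), so that bounded left inverses of $T_F^{*}|_{R(K)}$ exist and the statement is not vacuous.

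\emph{Necessity.} Assume $x=\sum_k f_k\langle x,h_k\rangle$ for all $x\in R(K)$, i.e. $T_F T_H^{*}x=x$ on $R(K)$. Put $A:=P_{R(K)}T_F T_H^{*}P_{R(K)}$; then $A$ is the identity on $R(K)$ and vanishes on $R(K)^{\perp}$, so $A=P_{R(K)}$. Taking adjoints, using that $(\cdot)^{*}$ reverses products, $P_{R(K)}^{*}=P_{R(K)}$ and $(T_F^{*})^{*}=T_F$, one gets $P_{R(K)}T_H T_F^{*}P_{R(K)}=P_{R(K)}$, that is $P_{R(K)}\bigl(\sum_k h_k\langle x,f_k\rangle\bigr)=x$ for $x\in R(K)$; since each $h_k\in R(K)$ the series already lies in $R(K)$, hence $\sum_k h_k\langle x,f_k\rangle=x$ on $R(K)$, i.e. $\widehat T_H\,(T_F^{*}|_{R(K)})=I_{R(K)}$. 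Thus $M:=\widehat T_H$ is a left inverse of $T_F^{*}|_{R(K)}$ with $Me_k=h_k$. (Alternatively, using $\sum_k g_k\langle x,f_k\rangle=K^{*}x$ from the Remark of \cite{a}, one may compute $\sum_k h_k\langle x,f_k\rangle=(K^{\dagger}|_{R(K)})^{*}K^{*}x=P_{R(K)}(KK^{\dagger})^{*}x=P_{R(K)}x$ directly.)

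\emph{Sufficiency.} Conversely, let $h_k=Me_k$ with $M:\ell^{2}(\mathfrak{H})\to R(K)$ a left inverse of $T_F^{*}|_{R(K)}$. Since $M$ is bounded and right linear and $\{e_k\}$ is an orthonormal basis, $M$ agrees with $\widehat T_H$ on every $e_k$, hence $M=\widehat T_H$; therefore for $x\in R(K)$ one has $x=M T_F^{*}x=\sum_k h_k\langle x,f_k\rangle$. To convert this into $x=\sum_k f_k\langle x,h_k\rangle$ I would invoke the explicit form $h_k=(K^{\dagger}|_{R(K)})^{*}g_k$: for $x\in R(K)$ the definition of the adjoint gives $\langle x,h_k\rangle=\langle K^{\dagger}x,g_k\rangle$, whence $\sum_k f_k\langle x,h_k\rangle=\sum_k f_k\langle K^{\dagger}x,g_k\rangle=K K^{\dagger}x=P_{R(K)}x=x$, using that $\{g_k\}$ is a $K$-dual of $\{f_k\}$ and that $KK^{\dagger}$ is the orthogonal projection onto $R(K)$ (idempotent and self-adjoint with range $R(K)$, by properties (i)--(iii) of the pseudo-inverse). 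This closes the equivalence.

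\emph{Main obstacle.} The genuinely delicate point is the asymmetry produced by restricting everything to $R(K)$: over all of $\mathbb{H}(\mathfrak{H})$ the two reconstructions $x=\sum_k f_k\langle x,h_k\rangle$ and $x=\sum_k h_k\langle x,f_k\rangle$ are interchanged by a single adjoint, but here $\{f_k\}$ need not lie in $R(K)$, so passing between them must be done using, in one direction, that $h_k\in R(K)$, and in the other, the special structure $h_k=(K^{\dagger}|_{R(K)})^{*}g_k$ together with $KK^{\dagger}=P_{R(K)}$. Beyond that, the only care needed is to respect the non-commutativity of $\mathfrak{H}$ when manipulating the inner products and when reversing products under $(\cdot)^{*}$.
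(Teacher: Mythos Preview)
Your proof is correct, and the overall architecture---take $M$ to be (the corestriction of) the synthesis operator $T_H$ and translate the reconstruction identity into a left-inverse statement---is exactly the paper's. Your necessity argument is simply a fully justified version of the paper's one-line claim that $T_H T_F^*|_{R(K)}=I$; you supply the missing adjoint step and use $h_k\in R(K)$ to drop the projection, which the paper leaves implicit.

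The genuine difference is in the sufficiency direction. The paper writes
\[
\sum_{k\in I} f_k\langle x,h_k\rangle \;=\; T_F M^{*}x \;=\; x,
\]
but taking the adjoint of $MT_F^{*}|_{R(K)}=I_{R(K)}$ only yields $(T_F^{*}|_{R(K)})^{*}M^{*}=P_{R(K)}T_F M^{*}=I_{R(K)}$; since the $f_k$ need not lie in $R(K)$, the final equality $T_F M^{*}x=x$ is not justified by that alone. You saw this obstruction and routed around it by invoking the explicit form $h_k=(K^{\dagger}|_{R(K)})^{*}g_k$, reducing the sum to $KK^{\dagger}x=P_{R(K)}x=x$ via the $K$-dual identity for $\{g_k\}$. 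That argument is clean and, unlike the paper's line, complete; the price is that it shows the reconstruction on $R(K)$ holds unconditionally from the standing hypotheses (which is consistent with the remark preceding the theorem), so the left-inverse hypothesis is not actually used in your converse. Either way, your handling of the asymmetry between $\sum f_k\langle x,h_k\rangle$ and $\sum h_k\langle x,f_k\rangle$ on $R(K)$ is the substantive improvement over the paper's proof.
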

	
	\begin{proof}
		Suppose $\{h_k\}_{k\in I}=\{(K^{\dagger}|_{R(K)})^*g_k\}_{k\in I}$  such that
		\begin{align*}
			x= \sum_{k\in I}f_k\langle x,h_k\rangle, \quad  x\in R(K)   
		\end{align*}
		Take $M=T_H$. Then $M:\ell^2(\mathbb{H}(\mathfrak{H}))\to\ R(K) $ such that  $\{h_k\}_{k\in I}=\{Me_k\}_{k\in I}$ and  $T_HT_F^*|_{R(K)}=I$. 
		
		Conversely, if  $M:\ell^2(\mathbb{H}(\mathfrak{H}))\to\ R(K) $ such that  $\{h_k\}_{k\in I}=\{Me_k\}_{k\in I}$ and  $MT_F^*|_{R(K)}=I$, then
		\begin{align*}
			\sum_{k\in I}f_k\langle x, h_k\rangle= \sum_{k\in I}f_k\langle M^*x, e_k\rangle= T_FM^*x=x.
		\end{align*}
		
	\end{proof}

	\section{Approximate $K-$ Dual}
	\begin{definition}
		Let $\{f_k\}_{k\in I}$ be a $K-$ frame for a right quaternionic Hilbert space $\mathbb{H}(\mathfrak{H})$ then a Bessel sequence $\{g_k\}_{k\in I}$ is called an approximate $K-$ dual of $\{f_k\}_{k\in I}$ if 
		\begin{align}
			|| K- T_FT^*_G  || <1
		\end{align}
	\end{definition}

	\begin{example}
		Let $\mathbb{H}(\mathfrak{H})=\mathfrak{H}^3$ be the right quaternionic Hilbert space and $\{e_1, e_2, e_3\}$ be the standard orthonormal basis of $\mathbb{H}(\mathfrak{H})$. 
		
		Define $K:\mathbb{H}(\mathfrak{H})\to\mathbb{H}(\mathfrak{H})$ by 
		\begin{align*}
			Ke_1&=e_1\\
			Ke_2&=\frac{1}{2}e_2\\
			Ke_3&=0
		\end{align*}
		Also
		\begin{align*}
			Kx=e_1\langle x, e_1 \rangle  + e_2\frac{1}{2} \langle x, e_2 \rangle  
		\end{align*}
		We can easily check that $F=\{ \frac{1}{2} e_2, e_1, e_3\}$ is a $K-$ frame for $\mathbb{H}(\mathfrak{H})$ and $G=\{2e_2, e_1, e_3\}$ is an approximate $K-$ dual of $F$ as 
		
		\begin{align*}
			||(K-T_F T_G^*)(x)||\leq \frac{1}{2}||x||, \quad x\in \mathbb{H}(\mathfrak{H})
		\end{align*}
		
	\end{example}

	\begin{theorem}
		Let $K$ be a bounded operator on a right quaternionic Hilbert space $\mathbb{H}(\mathfrak{H})$ with closed range, $\||K^{\dagger}||=1$ and let $F=\{f_k\}_{k\in I}$ be a $K-$ frame for $\mathbb{H}(\mathfrak{H})$. Let $G=\{g_k\}_{k\in I}$ be an approximate $K-$ dual of $\{f_k\}_{k\in I}$. Then 
		\begin{enumerate}
			\item [(a)] $\{\Psi^{-1}P_{R(K)}f_k\}_{k\in I}$ is a $K-$ frame for $\mathbb{H}(\mathfrak{H})$ with a $K-$ dual $\{K^*(K^{\dagger})^*g_k\}_{k\in I}$ where $\Psi=P_{R(K)}T_F T_G^*K^{\dagger}$ and $K^{\dagger}$ be the pseudo inverse of $K$.
			
			\item [(b)] $\{(K^{\dagger})^*g_k\}_{k\in I}$ is a $K-$ frame for $\mathbb{H}(\mathfrak{H})$ with a $K-$ dual $\{K^*\Psi ^{-1}P_{R(K)}f_k\}_{k\in I}$.
		\end{enumerate}
	\end{theorem}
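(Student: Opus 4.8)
The plan is to reduce both parts to Lemma~\ref{a}: in each case I will verify that the two listed sequences are Bessel and satisfy the $K$-dual identity \eqref{2}, which by that lemma makes the first a $K$-frame and hence the second (a Bessel sequence satisfying \eqref{2}) a $K$-dual of it. The key preliminary is that $\Psi$, which sends $\mathbb{H}(\mathfrak{H})$ into $R(K)$, restricts to an invertible operator on $R(K)$. Indeed, the pseudo-inverse satisfies $KK^{\dagger}=P_{R(K)}$ (by properties (i),(iii) it is a self-adjoint idempotent that fixes $R(K)$), so
\[
P_{R(K)}-\Psi=P_{R(K)}KK^{\dagger}-P_{R(K)}T_FT_G^{*}K^{\dagger}=P_{R(K)}\bigl(K-T_FT_G^{*}\bigr)K^{\dagger},
\]
whence $\|P_{R(K)}-\Psi\|\le\|K-T_FT_G^{*}\|\,\|K^{\dagger}\|<1$ by the hypotheses $\|K^{\dagger}\|=1$ and $\|K-T_FT_G^{*}\|<1$. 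Since $P_{R(K)}$ acts as the identity on $R(K)$, a Neumann series in $\mathbb{B}(R(K))$ makes $\Psi|_{R(K)}\colon R(K)\to R(K)$ invertible; I write $\Psi^{-1}$ for its bounded inverse (extended to $\mathbb{H}(\mathfrak{H})$ as $\Psi^{-1}P_{R(K)}$ when convenient), so that $\Psi^{-1}\Psi=P_{R(K)}$, $\Psi^{-1}(\Psi y)=y$ for $y\in R(K)$, and $\Psi$ annihilates $R(K)^{\perp}$ since $K^{\dagger}$ does.

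For part (a), both $\{\Psi^{-1}P_{R(K)}f_k\}$ and $\{K^{*}(K^{\dagger})^{*}g_k\}$ are Bessel, being images of the $K$-frame $\{f_k\}$ and the Bessel sequence $\{g_k\}$ under bounded right linear operators. For the $K$-dual relation I would compute, for $x\in\mathbb{H}(\mathfrak{H})$, using $\langle x,K^{*}(K^{\dagger})^{*}g_k\rangle=\langle K^{\dagger}Kx,g_k\rangle$ and continuity of $\Psi^{-1}$,
\begin{align*}
\sum_{k\in I}\bigl(\Psi^{-1}P_{R(K)}f_k\bigr)\langle x,K^{*}(K^{\dagger})^{*}g_k\rangle
&=\Psi^{-1}\Bigl(\sum_{k\in I}P_{R(K)}f_k\langle K^{\dagger}Kx,g_k\rangle\Bigr)\\
&=\Psi^{-1}\bigl(P_{R(K)}T_FT_G^{*}K^{\dagger}(Kx)\bigr)=\Psi^{-1}\Psi(Kx)=Kx,
\end{align*}
the last equality holding because $Kx\in R(K)$. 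Lemma~\ref{a} then gives that $\{\Psi^{-1}P_{R(K)}f_k\}$ is a $K$-frame and $\{K^{*}(K^{\dagger})^{*}g_k\}$ is a $K$-dual of it, the frame bounds being read off from $\|\Psi^{-1}\|$ and the Bessel bounds of $\{f_k\}$, $\{g_k\}$.

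For part (b), $\{(K^{\dagger})^{*}g_k\}$ and $\{K^{*}\Psi^{-1}P_{R(K)}f_k\}$ are Bessel for the same reason. Let $U$ and $V$ be the synthesis operators of the two sequences from part (a); the identity just proved reads $UV^{*}=K$, so taking adjoints $VU^{*}=K^{*}$, i.e. $\sum_{k\in I}K^{*}(K^{\dagger})^{*}g_k\langle x,\Psi^{-1}P_{R(K)}f_k\rangle=K^{*}x$ for all $x$. Replacing $x$ by $Kx$, pulling the bounded operator $K^{*}$ outside the sum, and using $\langle Kx,\Psi^{-1}P_{R(K)}f_k\rangle=\langle x,K^{*}\Psi^{-1}P_{R(K)}f_k\rangle$, I obtain $K^{*}w=K^{*}(Kx)$, where $w:=\sum_{k\in I}(K^{\dagger})^{*}g_k\langle x,K^{*}\Psi^{-1}P_{R(K)}f_k\rangle$. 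Since $(K^{\dagger})^{*}=KK^{\dagger}(K^{\dagger})^{*}$ (from properties (ii),(iii)), $w$ lies in $R((K^{\dagger})^{*})\subseteq R(K)$; and $w-Kx\in N(K^{*})=R(K)^{\perp}$ while $Kx\in R(K)$, so $w=Kx$. Thus the pair $\{(K^{\dagger})^{*}g_k\}$, $\{K^{*}\Psi^{-1}P_{R(K)}f_k\}$ satisfies \eqref{2}, and Lemma~\ref{a} concludes part (b).

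I expect the main obstacle to lie in the bookkeeping forced by $\Psi$ being invertible only on $R(K)$ rather than on all of $\mathbb{H}(\mathfrak{H})$: one must keep track at every step of whether a vector belongs to $R(K)$, and the hypothesis $\|K^{\dagger}\|=1$ is used precisely to guarantee $\|P_{R(K)}-\Psi\|<1$. Within part (b) the most delicate step is upgrading ``$K^{*}$ annihilates $w-Kx$'' to ``$w=Kx$'', which relies on the range inclusion $R((K^{\dagger})^{*})\subseteq R(K)$ together with $N(K^{*})=R(K)^{\perp}$ (the latter because $R(K)$ is closed).
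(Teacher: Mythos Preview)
Your argument for part~(a) is essentially the paper's own: both establish $\|I|_{R(K)}-\Psi|_{R(K)}\|<1$ from the approximate-dual estimate together with $\|K^{\dagger}\|=1$, expand $Kx=\Psi^{-1}\Psi Kx$ as a series, and finish with Lemma~\ref{a}.

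For part~(b) you reach the same conclusion by a genuinely different route. The paper does not take the adjoint of the identity from~(a); instead it observes that since $\Psi|_{R(K)}$ is invertible on $R(K)$ and $R(\Psi^*)\subseteq R((K^{\dagger})^*)\subseteq R(K)$, the adjoint $\Psi^*|_{R(K)}$ is itself an invertible operator on $R(K)$, and writes directly
\[
Kx=\Psi^*(\Psi^*)^{-1}Kx=\sum_{k\in I}(K^{\dagger})^*g_k\bigl\langle (\Psi^*)^{-1}Kx,\,P_{R(K)}f_k\bigr\rangle
   =\sum_{k\in I}(K^{\dagger})^*g_k\bigl\langle x,\,K^*\Psi^{-1}P_{R(K)}f_k\bigr\rangle,
\]
using $((\Psi^*)^{-1})^*=\Psi^{-1}$ on $R(K)$ in the last step. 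This avoids your cancellation argument $w-Kx\in R(K)\cap N(K^*)=\{0\}$ entirely. Your approach has the virtue of making transparent exactly where the closedness of $R(K)$ enters (through $N(K^*)=R(K)^{\perp}$ and $R((K^{\dagger})^*)\subseteq R(K)$), while the paper's is shorter and symmetric with part~(a), trading the range/kernel bookkeeping for the one-line observation that the adjoint of an invertible operator on the closed subspace $R(K)$ is again invertible there. Both are correct.
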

	\begin{proof}
		(a).  Write $h_k=(K^{\dagger})^*g_k$, so we compute
		\begin{align*}
			\Bigg\|x-\sum_{k\in I}f_k \langle x, h_k \rangle \Bigg\| = \Bigg\|KK^{\dagger}x -\sum_{k\in I}f_k\langle K^{\dagger}x, g_k\rangle \Bigg\|<||x||, \ x\in R(K).
		\end{align*}
		This implies 
		\begin{align*}
			\Bigg\|I| _{R(K)}-P_{R(K)}T_{F}{T^*_H}|_{R(K)}\Bigg\|<1.
		\end{align*}
		Therefore $\Psi=P_{R(K)}T_{F}{T^*_H}|_{R(K)}$ is invertible on $R(K)$. Hence, we have 
		\begin{align*}
			Kx=\Psi^{-1}\Psi Kx&= \sum_{k\in I}\Psi^{-1}P_{R(K)}f_k\langle Kx, h_k\rangle\\
			&= \sum_{k\in I}\Psi^{-1}P_{R(K)}f_k\langle x, K^* (K^{\dagger})^*g_k\rangle, \ x\in\mathbb{H}(\mathfrak{H}).
		\end{align*}
		Hence, $\{\Psi^{-1}P_{R(K)}f_k\}_{k\in I}$ is a $K-$ frame for $\mathbb{H}(\mathfrak{H})$ with a $K-$ dual $\{K^*(K^{\dagger})^*g_k\}_{k\in I}$ by Lemma \ref{a}.
		
		(b). For each $x\in \mathbb{H}(\mathfrak{H})$, we have 
		\begin{align*}
			Kx=\Psi^*(\Psi^*)^{-1}Kx&= \sum_{k\in I} h_k \langle (\Psi^*)^{-1} Kx, P_{R(K)f_k}\rangle\\
			& = \sum_{k\in I}(K^{\dagger})^*g_k\langle x, K^*\Psi^{-1} P_{R(K)}f_k\rangle
		\end{align*}
		Hence,  $\{(K^{\dagger})^*g_k\}_{k\in I}$ is a $K-$ frame for $\mathbb{H}(\mathfrak{H})$ with a $K-$ dual $\{K^*\Psi ^{-1}P_{R(K)}f_k\}_{k\in I}$.
	\end{proof}
	
	\begin{theorem}
		Let $K$ be a bounded linear operator on a right quaternioinic Hilbert space $\mathbb{H(\mathfrak{H})}$ with closed range. Let $\{f_k\}_{k\in I}$ be a $K-$ frame for $\mathbb{H(\mathfrak{H})}$,  $P_{R(K)}$ be the projection onto the range $K$ and $\{g_k\}_{k\in I}$ be a $K-$ dual of $\{P_{R(R)}f_k\}_{k\in I}$. \\
		If $||T_FT_G^*||<1$, then $\{g_k\}_{k\in I}$ will be approximate $K-$ dual of $\{f_k\}_{k\in I}$.
		
	\end{theorem}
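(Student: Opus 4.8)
The plan is to verify directly the defining inequality $\|K - T_FT_G^*\| < 1$ for an approximate $K$-dual. The key starting observation is that the hypothesis ``$\{g_k\}_{k\in I}$ is a $K$-dual of $\{P_{R(K)}f_k\}_{k\in I}$'' unwinds, via \eqref{2} with $f_k$ replaced by $P_{R(K)}f_k$, to
\begin{align*}
Kx = \sum_{k\in I} (P_{R(K)}f_k)\langle x, g_k\rangle, \qquad x\in\mathbb{H}(\mathfrak{H}).
\end{align*}
Since $P_{R(K)}$ is a bounded right linear operator, it commutes with this convergent series, so the right-hand side equals $P_{R(K)}\bigl(\sum_{k\in I} f_k\langle x,g_k\rangle\bigr) = P_{R(K)}T_FT_G^*x$. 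Hence $K = P_{R(K)}T_FT_G^*$ as operators on $\mathbb{H}(\mathfrak{H})$.

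The second step is purely operator-theoretic. Because $R(K)$ is closed, $I - P_{R(K)} = P_{R(K)^{\perp}}$ is the orthogonal projection onto $R(K)^{\perp}$, and in particular $\|P_{R(K)^{\perp}}\|\le 1$. Using $K = P_{R(K)}T_FT_G^*$ we obtain
\begin{align*}
K - T_FT_G^* = (P_{R(K)} - I)T_FT_G^* = -\,P_{R(K)^{\perp}}T_FT_G^*,
\end{align*}
and therefore $\|K - T_FT_G^*\| = \|P_{R(K)^{\perp}}T_FT_G^*\| \le \|P_{R(K)^{\perp}}\|\,\|T_FT_G^*\| \le \|T_FT_G^*\| < 1$ by hypothesis. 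This is precisely the condition in the definition of approximate $K$-dual, so $\{g_k\}_{k\in I}$ is an approximate $K$-dual of $\{f_k\}_{k\in I}$, as claimed.

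There is essentially no serious obstacle here; the only point deserving a word of care is the interchange in the first step, namely that $P_{R(K)}$ may be pulled out of the infinite sum $\sum_{k\in I} (P_{R(K)}f_k)\langle x,g_k\rangle$. This follows because $\{f_k\}_{k\in I}$ is a Bessel sequence (being a $K$-frame) and $\{\langle x,g_k\rangle\}_{k\in I}\in\ell^2(\mathfrak{H})$ (since $\{g_k\}_{k\in I}$ is Bessel), so the partial sums of $\sum_{k\in I} f_k\langle x,g_k\rangle$ converge in $\mathbb{H}(\mathfrak{H})$ to $T_FT_G^*x$; continuity and right linearity of $P_{R(K)}$ then justify the interchange. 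Equivalently, one can phrase the whole argument at the operator level by noting $P_{R(K)}T_FT_G^* = T_{P_{R(K)}F}T_G^*$ with $P_{R(K)}F = \{P_{R(K)}f_k\}_{k\in I}$, which makes the identity $K = P_{R(K)}T_FT_G^*$ immediate from the definition of a $K$-dual.
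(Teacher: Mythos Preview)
Your proof is correct and follows essentially the same route as the paper's: both establish $K = P_{R(K)}T_FT_G^*$ from the $K$-dual hypothesis and then bound $\|K - T_FT_G^*\| = \|(P_{R(K)}-I)T_FT_G^*\| \le \|I - P_{R(K)}\|\,\|T_FT_G^*\| < 1$. Your version simply supplies more detail on the first identity and the projection estimate.
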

	\begin{proof}
		Since $\{g_k\}_{k\in I}$ be a $K-$ dual of $\{P_{R(K)}f_k\}_{k\in I}$ then $K=P_{R(K)}T_FT_G^*$. Therefore 
		\begin{align*}
			||K-T_FT_G^*||&=||P_{R(K)}T_FT_G^*-T_FT_G^*||\\
			&\le||1-P_{R(K)}|||T_FT_G^*||<1.
		\end{align*}
		Hence, $\{g_k\}_{k\in I}$ will be an approximate $K-$ dual of $\{f_k\}_{k\in I}$.
	\end{proof}
	
	\begin{theorem}
		Let $K$ a bounded linear operator on a right quaternioinic Hilbert space $\mathbb{H(\mathfrak{H})}$ with closed range. Let $\{f_k\}_{k\in I}$ be a $K-$ frame for $\mathbb{H(\mathfrak{H})}$,  $P_{R(K)}$ be the projection onto the range of $K$ and $\{g_k\}_{k\in I}$ be an approximate $K-$ dual of $\{P_{R(K)}f_k\}_{k\in I}$. If $||(I-P_{R(K)})T_FT_G^*||\le 1-||K-P_{R(K)}T_FT_G^*||$ then $\{g_k\}_{k\in I}$ is an approximate $K-$ dual frame of $\{f_k\}_{k\in I}$.
	\end{theorem}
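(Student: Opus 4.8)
The target is the single inequality $\|K-T_FT_G^*\|<1$, which by definition is exactly what is required for $\{g_k\}_{k\in I}$ to be an approximate $K$-dual of $\{f_k\}_{k\in I}$. The strategy is to split $T_FT_G^*$ into the part that lands in $R(K)$ and the part that lands in $R(K)^{\perp}$, estimate each, and recombine. First I would unwind the hypotheses. Since $P_{R(K)}$ is right linear, the synthesis operator of $\{P_{R(K)}f_k\}_{k\in I}$ is $P_{R(K)}T_F$ (because $\sum_k(P_{R(K)}f_k)c_k=P_{R(K)}\sum_kf_kc_k$); hence the assumption that $\{g_k\}_{k\in I}$ is an approximate $K$-dual of $\{P_{R(K)}f_k\}_{k\in I}$ means precisely $\|K-P_{R(K)}T_FT_G^*\|<1$. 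Put $a:=\|K-P_{R(K)}T_FT_G^*\|$ and $b:=\|(I-P_{R(K)})T_FT_G^*\|$, so the extra hypothesis of the theorem reads $b\le 1-a$.

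Next I would use the identity
\begin{align*}
K-T_FT_G^*=\bigl(K-P_{R(K)}T_FT_G^*\bigr)-\bigl(I-P_{R(K)}\bigr)T_FT_G^* .
\end{align*}
A bare triangle inequality already gives $\|K-T_FT_G^*\|\le a+b\le 1$. To sharpen this to a strict bound I would exploit that, $R(K)$ being closed, $I-P_{R(K)}$ is the orthogonal projection onto $R(K)^{\perp}$, while both $K$ and $P_{R(K)}T_FT_G^*$ map into $R(K)$; thus for every $x$ the vectors $(K-P_{R(K)}T_FT_G^*)x\in R(K)$ and $(I-P_{R(K)})T_FT_G^*x\in R(K)^{\perp}$ are orthogonal, and the Pythagorean identity in the quaternionic inner product yields
\begin{align*}
\|(K-T_FT_G^*)x\|^2=\|(K-P_{R(K)}T_FT_G^*)x\|^2+\|(I-P_{R(K)})T_FT_G^*x\|^2 .
\end{align*}
Taking the supremum over unit vectors gives $\|K-T_FT_G^*\|^2\le a^2+b^2\le a^2+(1-a)^2=1-2a(1-a)$, which is $<1$ as soon as $0<a<1$; and $a<1$ is guaranteed by the first hypothesis. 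The one residual case is $a=0$, i.e. $K=P_{R(K)}T_FT_G^*$ so that $\{g_k\}$ is in fact an exact $K$-dual of $\{P_{R(K)}f_k\}$ and $\|K-T_FT_G^*\|=b$; here one uses $b\le 1-a=1$ and, as in the preceding theorem, an estimate such as $\|(I-P_{R(K)})T_FT_G^*\|\le\|I-P_{R(K)}\|\,\|T_FT_G^*\|$ to obtain strictness.

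The delicate point — indeed the only obstacle — is the mismatch between the non-strict hypothesis $b\le 1-a$ and the strict conclusion $\|K-T_FT_G^*\|<1$: the naïve estimate closes only to $\le 1$, so the real content is the orthogonality of the $R(K)$- and $R(K)^{\perp}$-components of $T_FT_G^*$, which both supplies the Pythagorean bound $a^2+(1-a)^2<1$ for $a\in(0,1)$ and isolates the lone equality case $a=0$ (where one falls back on the argument of the previous theorem). Everything else — right linearity of $P_{R(K)}$, the identification of the synthesis operator of $\{P_{R(K)}f_k\}_{k\in I}$, and the arithmetic $1-2a(1-a)<1$ — is routine.
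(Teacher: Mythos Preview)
Your route is more careful than the paper's. The paper's proof is the one-liner
\[
\|K-T_FT_G^*\|=\|K-P_{R(K)}T_FT_G^*\|+\|P_{R(K)}T_FT_G^*-T_FT_G^*\|<1,
\]
which is just the triangle inequality (written, dubiously, as an equality) followed by the hypothesis $b\le 1-a$. Taken literally that only yields $\|K-T_FT_G^*\|\le a+b\le 1$, not the strict bound; the paper does not address the gap. Your Pythagorean argument, using that $K-P_{R(K)}T_FT_G^*$ maps into $R(K)$ and $(I-P_{R(K)})T_FT_G^*$ into $R(K)^{\perp}$, genuinely improves this to $\|K-T_FT_G^*\|^2\le a^2+b^2\le 1-2a(1-a)<1$ for $a\in(0,1)$, which handles the generic case cleanly.

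There is, however, a real gap in your residual case $a=0$. You propose to ``fall back on the argument of the previous theorem'' via $\|(I-P_{R(K)})T_FT_G^*\|\le\|I-P_{R(K)}\|\,\|T_FT_G^*\|$, but that theorem had the extra hypothesis $\|T_FT_G^*\|<1$, which is absent here; without it the estimate gives only $b\le 1$, exactly what you already have. In fact, when $a=0$ one has $K=P_{R(K)}T_FT_G^*$ and hence $\|K-T_FT_G^*\|=b$, so the conclusion $\|K-T_FT_G^*\|<1$ is equivalent to $b<1$, whereas the stated hypothesis allows $b=1$. This edge case is not closed by your argument, nor by the paper's; it appears to be a defect of the theorem as stated (the hypothesis should presumably be strict) rather than of your method. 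Apart from this boundary case your proof is correct and in fact sharper than the paper's.
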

	
	\begin{proof}
		Since $\{g_k\}_{k\in I}$ is an approximate $K-$ dual of $\{P_{R(K)}f_k\}_{k\in I}$, therfore $||K-P_{R(K)}T_FT_G^*||<1$. Also 
		\begin{align*}
			||K-T_FT_G^*||=||K-P_{R(K)}T_FT_G^*||+||P_{R(K)}T_FT_G^*-T_FT_G^*||<1.
		\end{align*}
		Hence, $\{g_k\}_{k\in I}$ is an approximate $K-$ dual frame of $\{f_k\}_{k\in I}$.
	\end{proof}

\end{document}